 \theoremstyle{plain}
 \newtheorem{theorem}{Theorem}[section]
 \newtheorem{lemma}[theorem]{Lemma}
 \newtheorem{proposition}[theorem]{Proposition}
 \theoremstyle{definition}
 \newtheorem{definition}[theorem]{Definition}
 \theoremstyle{remark}
 \newtheorem{remark}{Remark}
 \numberwithin{equation}{section}
 \newcommand{\remove}[1]{}
\newcommand{\al} {\alpha}
\newcommand{\ga} {\gamma}
\newcommand{\Om} {\Omega}
\newcommand{\ra} {\rightarrow}
\newcommand{\la} {\lambda}
\newcommand{\noi} {\noindent}
\newcommand{\RR} {\mathbb{R}}
\newcommand{\DD} {\displaystyle}
\newcommand{\ro} {\rho}
\newcommand{\R}{\mathbb R^N\times\mathbb R^N}
\author[ ]{Reshmi Biswas\thanks{b.reshmi@iitg.ac.in}}
\author[ ]{Sweta Tiwari\thanks{swetatiwari@iitg.ac.in}}
\affil[ ]{Department of Mathematics}
\affil[ ] {Indian Institute of Technology Guwahati\\
Guwahati}\affil [ ] {Assam 781039\\ India.}
\begin{document}
 \title {Variable order nonlocal Choquard problem with variable exponents 
}

\date{}
\maketitle

\begin{abstract}
	In this article, we study the existence/multiplicity results for the following variable order nonlocal 
	Choquard problem with variable exponents
\begin{equation*}
\begin{array}{rl}
(-\Delta)_{p(\cdot)}^{s(\cdot)}u(x)&=\lambda|u(x)|^{\alpha(x)-2}u(x)+\left(\DD\int_\Omega\frac{F(y,u(y))}{|x-y|^{\mu(x,y)}}dy\right)f(x,u(x)),\\
&~\hspace{6cm} x\in \Omega, \\
u(x)&=0 ,\hspace{20mm} x\in \Omega^c:=\mathbb R^N\setminus\Omega,
\end{array}
\end{equation*}
where  $\Om\subset\mathbb R^N$ is a smooth and bounded domain, $N\geq 2$,  $p,s,\mu$ and $\alpha$ are continuous functions on $\mathbb R^N\times\mathbb R^N$ 
and $f(x,t)$ is Carath\'edory function with $F(x,t):=\displaystyle\int_{0}^{t} f(x,s)ds$.
Under suitable assumption on $s,p,\mu,\alpha$ and $f(x,t)$, first we study
the analogous Hardy-Sobolev-Littlewood-type result for variable exponents suitable for the fractional Sobolev space with variable order and
variable exponents.
Then we give the existence/multiplicity results for the above equation.\\\\
\noi {\bf Key words}: Choquard problem, Hardy-Sobolev-Littlewood inequality, Variable order fractional $p(\cdot)$- Laplacian, concave-convex nonlinearities.\\\\
 \noi {2010 Mathematics Subject Classification, Primary: 35J60, 35R11, 46E35.}
\end{abstract}
\maketitle
\newpage

\section{Introduction}
\noi Consider the following nonlocal Choquard problem involving variable order and variable exponents
\begin{equation}\label{mainprob*}
\;\;\;	\left.\begin{array}{rl}
(-\Delta)_{p(\cdot)}^{s(\cdot)} u(x)&=\left(\DD\int_\Om\frac{F(y,u(y))}{|x-y|^{\mu(x,y)}}dy\right)f(x,u(x)),\hspace{3mm}
x\in \Om, \\
u&=0 ,\hspace{30mm} x\in \Om^c:=\mathbb R^N\setminus\Om,
\end{array}
\right\}
\end{equation}
where  $\Om\subset\mathbb R^N$ is a smooth and bounded domain, $N\geq 2$,  $p\in C(\RR^N\times \RR^N,(1,\infty))$, $s\in C(\RR^N\times \RR^N, (0,1))$ 
%with $s(x,y)p(x,y)<N$ for all $(x,y)\in \RR^N\times \RR^N$ 
and $\mu\in C(\RR^N\times \RR^N,\mathbb R)$.
Also $f\in C(\Om\times\RR,\RR)$ is a Carath\'eodory function with the antiderivative $F(x,t)$ given by
$$F(x,t):=\DD\int_{0}^{t} f(x,s)ds.$$
The nonlocal operator  $(-\Delta)_{p(\cdot)}^{s(\cdot)}$ is defined, in the sense of Cauchy principle value, as
\begin{equation}\label{operator}
(-\Delta)_{p(\cdot)}^{s(\cdot)} u(x):=  P.V.
\int_{\RR^N}\frac{|
	u(x)-u(y)|^{p(x,y)-2}(u(x)-u(y))}{|
	x-y|^{N+s(x,y)p(x,y)}}dy, ~~x \in \RR^N,
\end{equation}
The nonlinearity on the right side of \eqref{mainprob*} is motivated by the equation
 $$-\Delta u+Vu=(I_\alpha\ast u^p)u^{p-1},$$
where $I_\alpha:\RR^N\rightarrow\RR$ is a Riesz potential defined for $x\in\RR^N\setminus\{0\}$ by
$$I_\alpha(x)=\frac{A_\alpha}{|x|^{N-\alpha}}.$$
 P. Choquard introduced this equation in for describing an electron trapped in its own hole. 
 This equation also appears in the theory of the polaron at rest (\cite{pekar}) and in modeling self-gravitating matter as given in \cite{moroz5}.
 In \cite{Lieb1}, Lieb has studied the existence and uniqueness of the minimizing solution for the problem
\[-\Delta u+u=(|x|^{\mu}\ast F(u))f(u) \text{ in } \RR^N,\]
where $f(t)$ has critical growth.
The multiplicity results for the Brezis-Nirenberg type problem of the nonlinear Choquard equation has been studied by Gao and Yang in (\cite{Gao-yang}-\cite{gao-yang1}).
Here the author have considered
 \[-\Delta u =\left(\DD\int_\Om\frac{|u|^{2^\ast_\mu}}{|x-y|^{\mu(x,y)}}dy\right)u^{2^\ast_\mu-1} +\la u,\hspace{3mm}
x\in \Om, \\
\text{ and }u=0 ,\;\; x\in \partial\Om,\]
where $2^\ast_{\mu}=\frac{(2N-\mu)}{(N-2)}, 0<\mu<N$. 
For more results on Choquard problem involving concave-convex nonlinearities we refer (\cite{sreenadh3}-\cite{sreenadh2},\cite{wang}-\cite{wang2}).\\
In recent years, problems involving nonlocal
operators have gained a lot of attentions due to their occurrence in real-world applications, such as, the thin obstacle problem,
optimization, finance, phase transitions and also in pure 
mathematical research, such as, minimal surfaces, conservation laws etc.
The celebrated work of Nezza et al. \cite{HG} provides the necessary functional set-up to study these nonlocal
problems using variational method. We refer \cite{Bisci} and references therein for more details on problems involving semi-linear fractional Laplace operator.
In continuation to this, the problems involving
quasilinear nonlocal fractional  $p$-Laplace operator are extensively studied by 
many researchers including
Squassina, Palatucci, Mosconi,  R\u{a}dulescu et al. (see \cite{FrPa, rad1, sq1, MoSq} ), where the authors  studied various aspects, viz., existence, multiplicity
and regularity of the solutions of the quasilinear nonlocal problem involving  fractional $p$-Laplace operator.
Recently, Choquard problem involving nonlocal operators have been studied by Squassina et.al in \cite{squassina} and Mukerjee and Sreenadh in \cite{sreenadh2}.
In \cite{sreenadh2}, authors have discussed the Brezis–Nirenberg type problem of nonlinear Choquard equation involving the fractional Laplacian
\[(-\Delta)^s u =\left(\DD\int_\Om\frac{|u|^{2^\ast_{\mu,s}}}{|x-y|^{\mu(x,y)}}dy\right)u^{2^\ast_{\mu,s}-2}u +\la u,\hspace{3mm}
x\in \Om, \\
\text{ and }u=0 ,\;\; x\in \RR^N\setminus\Om.\]\\
As the variable growth on the exponent $p$ in the local $p(x)$-Laplace operator, defined as div$(| \nabla u| ^{p(x)-2}\nabla u)$, 
makes it more suitable for modeling the problems like image restoration, obstacle problems compared to $p$-Laplace operator,  
henceforth, it is a natural inquisitiveness  to substitute	
the nonlocal fractional $p$-Laplace operator with the nonlocal operator involving variable exponents and variable order as defined in \eqref{operator} and expect 
better modeling. In analogy to the Lebesgue spaces with variable exponents (see \cite{Fan1, Die}), recently
Kaufmann et al.  introduced the fractional Sobolev spaces with variable exponents in \cite{KRV}. Some results involving fractional $p(\cdot)$-Laplace operator 
and associated fractional Sobolev spaces with variable exponents are
studied in \cite{ AB,AR}.\\
Very recently Alves, R\u{a}dulescu and Tavares have studied generalized
Choquard equations driven by non-homogeneous operators in \cite{Alves4}. In \cite{Alves5}, Alves et.al have proved a Hardy-Littlewood-Sobolev-type inequality for
variable exponents and used it to study the quasilinear Choquard equations involving
variable exponents.
Motivated by this, in our present work, we will establish a Hardy-Littlewood-Sobolev-type result for the functions in nonlocal Sobolev spaces with variable order 
and variable exponents
as defined in the section \ref{sec 3}. 
Then using this result, we study the
combined effect of concave and convex nonlinearities on the
existence and multiplicity of solutions for nonlocal Choquard problem involving variable order and variable exponents. 
To the best of our knowledge, this is the first work addressing the variable order nonlocal Choquard problem with variable exponents.\\
In section \ref{sec 2} we first state the main results of this article. In section \ref{sec 3} we give the preliminaries
on fractional Sobolev spaces with variable order and variable exponents. 
We would like to emphasis that here we are defining and establishing the embedding theorems for the nonlocal fractional order spaces,
when the exponent $p$ as well as the order $s$ admit 
variable growth.
We give the proofs of the main results in section \ref{sec 4}.

  \section{Statements of the main theorems}\label{sec 2}
 First for any real valued function $\Phi$ defined on a domain $\mathcal{D}$, we set 
{\small\begin{equation}\label{notation1}
	\Phi^{-}:=\inf_{\mathcal{D}} \Phi(x)\text{ and } \Phi^{+}:=\sup_{ \mathcal{D}}\Phi(x).
	\end{equation}}
\noindent We also define the function space
$$C_+(\mathcal{D}):=\{\Phi\in C(\mathcal{D}, \RR):1 <\Phi^{-}\leq \Phi^{+}<\infty\}.$$
We consider the following assumptions on the variable order $s$, variable exponents $p$ and $\mu$ appearing in \eqref{mainprob*}.
\begin{itemize}
	\item[{\bf(S1)}] $s:\mathbb R^N\times\mathbb R^N\rightarrow\mathbb R$ is a continuous and symmetric function, i.e., $s(x,y)=s(y,x)$ 
	 for all $(x,y)\in \RR^N\times \RR^N$
	with $0<s^-\leq s^+<1$.
	\item[{\bf(P1)}] $p\in C_+(\mathbb R^N\times\mathbb R^N)$ is continuous and symmetric function, i.e., $p(x,y)=p(y,x)$ for all $(x,y)\in \RR^N\times \RR^N$
	 with $s^+p^+<N$.
	\item[{\bf($\mu$1)}]  $\mu:\mathbb R^N\times\mathbb R^N\rightarrow\mathbb R$ is symmetric function, i.e., $\mu(x,y)=\mu(y,x)$
	for all $(x,y)\in \RR^N\times \RR^N$ with $0<\mu^-\leq \mu^+<N$.
	\end{itemize}
\begin{theorem}\label{HLS1}
Let $\Om$ be a smooth bounded domain in $\RR^N,~N\geq2.$ Assume $(S1),(P1)$ and $(\mu1)$ hold and $s$ and $p$ are uniformly continuous in $\RR^N\times\RR^N$.
 Let $q\in C_+(\R)$ such that
 \begin{equation*}
  \label{q1}
  \frac{2}{q(x,y)}+\frac{\mu(x,y)}{N}=2 \text{ for all }(x,y)\in \RR^N\times \RR^N,
 \end{equation*}
 and $r\in C_+(\RR^N)\cap\mathcal{M}$ where
\begin{equation*}
 \label{r1}
 \mathcal{M}:=\left\{p(x,x)\leq r(x)q^-\leq r(x)q^+<p_s^*(x):=\frac{Np(x,x)}{N-s(x,x)p(x,x)}\text{ for all } x\in\RR^N\right\}.
\end{equation*}
Then for $u\in W^{s(x,y),p(x,x),p(x,y)}(\RR^N)$ (as defined in section \ref{sec 3}), $|u|^{r(\cdot)}\in L^{q^+}(\RR^N)\cap L^{q^-}(\RR^N)$ with
\begin{equation*}
 \label{hls1}
 \int_{\RR^N}\int_{\RR^N}\frac{|u(x)|^{r(x)}|u(y)|^{r(y)}}{|x-y|^{\mu(x,y)}}dxdy\leq C\left(\||u|^{r(\cdot)}\|^2_{L^{q^+}(\RR^N)}+\||u|^{r(\cdot)}\|^2_{L^{q^-}(\RR^N)}\right)
\end{equation*}
where $C>0$ is a constant, independent of $u$.
\end{theorem}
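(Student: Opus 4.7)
The strategy is to reduce the variable-exponent Hardy--Littlewood--Sobolev bound to the classical HLS inequality (with constant exponents) by splitting the domain according to the size of $|x-y|$. Decompose $\RR^N\times\RR^N = A_1\cup A_2$ with $A_1 := \{(x,y):|x-y|\le 1\}$ and $A_2 := \{(x,y):|x-y|>1\}$. On $A_1$ one has $|x-y|^{-\mu(x,y)}\le |x-y|^{-\mu^+}$, while on $A_2$ one has $|x-y|^{-\mu(x,y)}\le |x-y|^{-\mu^-}$. The relation $\frac{2}{q(x,y)}+\frac{\mu(x,y)}{N}=2$, evaluated at the extremes of $\mu$, gives $q^+ = \frac{2N}{2N-\mu^+}$ and $q^- = \frac{2N}{2N-\mu^-}$, which are exactly the self-dual HLS conjugate exponents for kernel orders $\mu^+$ and $\mu^-$ respectively.

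Before invoking HLS I would verify that $|u|^{r(\cdot)}\in L^{q^+}(\RR^N)\cap L^{q^-}(\RR^N)$, equivalently $u\in L^{r(\cdot)q^+}(\RR^N)\cap L^{r(\cdot)q^-}(\RR^N)$ in the variable-exponent Lebesgue scale. The hypothesis $r\in\mathcal{M}$ places the pointwise exponents $r(\cdot)q^\pm$ inside $[p(x,x),p_s^*(x))$ for every $x$, which is precisely the range where the Sobolev-type embedding for $W^{s(\cdot,\cdot),p(\cdot,\cdot),p(\cdot,\cdot)}(\RR^N)$ is available (see Section~\ref{sec 3}). Uniform continuity of $s$ and $p$ is what allows the embedding to be invoked.

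Granted the integrability, on $A_1$ I would estimate
\begin{equation*}
\iint_{A_1}\frac{|u(x)|^{r(x)}|u(y)|^{r(y)}}{|x-y|^{\mu(x,y)}}\,dx\,dy \;\le\; \iint_{\RR^N\times\RR^N}\frac{|u(x)|^{r(x)}|u(y)|^{r(y)}}{|x-y|^{\mu^+}}\,dx\,dy,
\end{equation*}
and apply the classical HLS inequality with $f=g=|u|^{r(\cdot)}$ and exponent $q^+$ to conclude this is bounded by $C\||u|^{r(\cdot)}\|_{L^{q^+}(\RR^N)}^2$. The identical argument on $A_2$ with kernel order $\mu^-$ and conjugate exponent $q^-$ produces a bound by $C\||u|^{r(\cdot)}\|_{L^{q^-}(\RR^N)}^2$. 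Adding the two contributions gives the stated inequality.

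The main obstacle is the integrability step: it rests entirely on the Sobolev-type embedding for the fractional space with simultaneously variable order and variable exponent, and the proper formulation of that space requires a careful modular/norm argument establishing the continuous inclusion into $L^{h(\cdot)}(\RR^N)$ for every admissible $h$ with $p(x,x)\le h(x)<p_s^*(x)$. Once this embedding is in hand from Section~\ref{sec 3}, the kernel split and the appeal to the classical HLS are routine, and no further analytic difficulty arises.
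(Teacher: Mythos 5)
Your argument is essentially the paper's own proof: the paper likewise handles the integrability of $|u|^{r(\cdot)}$ via the Sobolev embedding of Theorem \ref{Sobolev for unbounded}, and then reduces to the classical constant-exponent HLS inequality by bounding the kernel as $|x-y|^{-\mu(x,y)}\leq |x-y|^{-\mu^{+}}+|x-y|^{-\mu^{-}}$ (which is exactly your split into $|x-y|\leq 1$ and $|x-y|>1$), with $\mu^{\pm}$ and $q^{\pm}$ linked by the same self-dual relation $q^{\pm}=\frac{2N}{2N-\mu^{\pm}}$. The only cosmetic difference is that the paper packages the kernel comparison as a separate two-function proposition (with a sequence hypothesis that is automatic in the self-dual case you use), so your proposal is correct and follows the same route.
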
 
Next using the above inequality, we study the existence of the solution of
the following variable order nonlocal Choquard equation with variable exponents:
\begin{equation}\label{mainprob1}
\;\;\;	\left.\begin{array}{rl}
(-\Delta)_{p(\cdot)}^{s(\cdot)} u(x)&=\left(\DD\int_\Om\frac{F(y,u(y))}{|x-y|^{\mu(x,y)}}dy\right)f(x,u(x)),\hspace{3mm}
x\in \Om, \\
u&=0 ,\hspace{15mm} x\in \mathbb R^N\setminus\Om,
\end{array}
\right\}
\end{equation}
where $f\in C(\Om\times\Om,\RR)$ is a Carath\'eodory function and $F(x,t)=\int_0^t f(x,s)ds$ for a.e. $x\in\Om$ with the following
assumptions:
 \begin{itemize}
  \item[(F1)] There exists a constant $M>0$ and a function $r\in C_+(\RR^N)\cap\mathcal{M}$  with
   $r^->p^+$ such that
  \begin{equation*}
   |f(x,t)|\leq M(|t|^{r(x)-1}), \text{ for all }(x,t)\in \Om\times\RR.
  \end{equation*}
\item[${(F2)}$] There exist a constant  $\Theta>p^+$ such that $0<\Theta F(x,t)\leq 2tf(x,t)$ for all nonzero $t\in\RR$ and for a.e. $x\in\Om$.
 \end{itemize}
We define the weak solution of problem \eqref{mainprob1} in the functional space $X_{0}$, defined in section \ref{sec 3}, as follows:
\begin{definition}\label{defi 2.1}
	A function $u\in X_0$ is called a weak solution of \eqref{mainprob1}, if for every $w\in X_{0}$ we have 
	\begin{align*}
		\int_{{\RR^N}\times{\RR^N}}\frac{|
			u(x)-u(y)|^{p(x,y)-2}(u(x)-u(y))(w(x)-w(y))}{|
			x-y|^{N+s(x,y)p(x,y)}}dxdy\nonumber\\=\frac{1}{2}\int_{\Om}\int_{\Om}\frac{F(x,u(x))f(y,u(y))w(y)}{|x-y|^{\mu(x,y)}}dxdy.
			\end{align*} 
\end{definition}
We have the following existence result for the nontrivial solution for \eqref{mainprob1}.
\begin{theorem}
 \label{existence}
Let $\Om$ be a smooth bounded domain in $\RR^N,~N\geq2.$ Assume $(S1),(P1)$ and $(\mu1)$ and let $q$ be as in \eqref{q1}. 
 Also  assume that $f(x,t)$ satisfies $(F1)$ and $(F2)$.
Then the problem \eqref{mainprob1} admits a nontrivial solution.
\end{theorem}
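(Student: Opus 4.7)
The plan is to apply the Mountain Pass Theorem to the energy functional
$$J(u):=\int_{\R}\frac{|u(x)-u(y)|^{p(x,y)}}{p(x,y)\,|x-y|^{N+s(x,y)p(x,y)}}\,dx\,dy \;-\; \frac{1}{2}\int_\Om\int_\Om\frac{F(x,u(x))F(y,u(y))}{|x-y|^{\mu(x,y)}}\,dx\,dy$$
defined on the Banach space $X_0$ from Section \ref{sec 3}, whose critical points coincide with the weak solutions of \eqref{mainprob1} in the sense of Definition \ref{defi 2.1}. First I would show that $J\in C^1(X_0,\RR)$: the kinetic piece is the standard Gagliardo modular whose $C^1$-character is classical for variable-exponent fractional spaces, while for the Choquard piece I would invoke $(F1)$, giving $|F(x,t)|\le C|t|^{r(x)}$, together with Theorem \ref{HLS1} applied to $r\in C_+(\RR^N)\cap\mathcal{M}$, to guarantee finiteness, local boundedness, and continuity of the Fréchet derivative through standard variable-exponent Lebesgue space estimates.

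Next I would verify the mountain pass geometry. Combining Theorem \ref{HLS1} with the Sobolev-type embedding of $X_0$ into $L^{r(\cdot)q^\pm}(\Om)$ from Section \ref{sec 3} and the modular--norm comparison for variable-exponent spaces, one obtains, for $\|u\|_{X_0}$ small,
$$J(u)\ge C_1\,\|u\|_{X_0}^{\,p^+}-C_2\,\|u\|_{X_0}^{\,2r^-}.$$
Since $r^->p^+$, this yields $J(u)\ge\beta>0$ on a sphere $\|u\|_{X_0}=\rho$ of small radius. Condition $(F2)$ is an Ambrosetti--Rabinowitz-type inequality giving $F(x,t)\ge c_1|t|^{\Theta/2}-c_2$ for large $|t|$; then, for any fixed nontrivial $v\in X_0$, the double-integral structure of the Choquard term together with $\Theta>p^+$ forces $J(tv)\to-\infty$ as $t\to+\infty$, providing the second geometric ingredient.

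The principal obstacle is the verification of the Palais--Smale condition. For a PS sequence $\{u_n\}\subset X_0$, the identity $J(u_n)-\tfrac{1}{\Theta}\langle J'(u_n),u_n\rangle$ combined with $(F2)$ renders the Choquard defect nonpositive (using $tf(x,t)\ge\tfrac{\Theta}{2}F(x,t)$ and symmetry of $\mu$), leaving the kinetic modular controlled by the factor $\tfrac{1}{p^+}-\tfrac{1}{\Theta}>0$; the modular--norm relation then forces boundedness of $\|u_n\|_{X_0}$. Reflexivity of $X_0$ gives $u_n\rightharpoonup u$ along a subsequence, and the compact embedding $X_0\hookrightarrow\hookrightarrow L^{r(\cdot)q^\pm}(\Om)$, which holds because $r(x)q^\pm<p_s^*(x)$ pointwise by $r\in\mathcal{M}$, produces strong convergence in the relevant Lebesgue spaces; combined with Theorem \ref{HLS1} and $(F1)$, this forces convergence of the Choquard nonlinearity in the dual of $X_0$. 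Finally, the $(S_+)$-property of the fractional $p(\cdot)$-Laplacian, established via a Simon-type inequality applied to $|u_n-u|^{p(x,y)}$, upgrades the weak convergence to strong convergence in $X_0$. The Mountain Pass Theorem then yields a nontrivial critical point of $J$, which is the desired weak solution of \eqref{mainprob1}.
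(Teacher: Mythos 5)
Your proposal is correct and follows essentially the same route as the paper: mountain pass geometry via the estimate $J(u)\gtrsim \|u\|_{X_0}^{p^+}-\|u\|_{X_0}^{2r^-}$ (using Theorem \ref{HLS1}, $(F1)$ and $r^->p^+$) together with the Ambrosetti--Rabinowitz-type lower bound $F(x,t)\gtrsim |t|^{\Theta/2}$ from $(F2)$, then the Palais--Smale condition via the identity $J(u_m)-\tfrac{1}{\Theta}\langle J'(u_m),u_m\rangle$, the compact embedding of $X_0$ into the relevant variable-exponent Lebesgue spaces, and Simon's inequalities to upgrade weak to strong convergence. This matches Lemmas \ref{lem 5.1} and \ref{lem 5.2} and the paper's proof of Theorem \ref{existence}.
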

 Next motivated by the pioneer work of Cerami et al. \cite{ABC} on problems involving concave and convex nonlinearities in case of local operator and \cite{ BERF} in case of nonlocal
operator, we study the existence of multiple solutions for variable order nonlocal problem 
with variable exponents involving concave and convex nonlinearities.           
\begin{equation}\label{mainprob}
{\small \left.\begin{array}{rl}
(-\Delta)_{p(\cdot)}^{s(\cdot)} u(x)&=\lambda| u(x)|^{\al(x)-2}u(x)+\left(\DD\int_\Om\frac{F(y,u(y))}{|x-y|^{\mu(x,y)}}dy\right)f(x,u(x)),\hspace{.5mm}
x\in \Om, \\
u&=0 ,\hspace{15mm} x\in \mathbb R^N\setminus\Om,
\end{array}
\right\}}
\end{equation}
where $f$ and $F$ be as in Theorem \ref{existence}.
\begin{definition}\label{defi 2.2}
	A function $u\in X_0$ is called a weak solution of \eqref{mainprob}, if for every $w\in X_{0}$ we have 
	\begin{align*}
		\int_{{\RR^N}\times{\RR^N}}\frac{|
			u(x)-u(y)|^{p(x,y)-2}(u(x)-u(y))(w(x)-w(y))}{|
			x-y|^{N+s(x,y)p(x,y)}}dxdy\nonumber\\=\la\int_{\Om}| u(x)|^{\al(x)-2}u(x)w(x) dx
			+\frac{1}{2}\int_{\Om}\int_{\Om}\frac{F(x,u(x))f(y,u(y))w(y)}{|x-y|^{\mu(x,y)}}dxdy.
			\end{align*} 
\end{definition}
\begin{theorem}\label{multiplicity}
Let $\Om$ be a smooth bounded domain in $\RR^N,~N\geq2.$ Assume $(S1),(P1)$, $(\mu1)$ and $(F1)-(F2)$ and let $q$ be as in \eqref{q1}. We also assume 
that the variable exponents $\al(\cdot)\in C_+(\overline\Om)$ such that $\al^+<p^-.$
	Then there exists $\Lambda>0 $ such that for all $\la \in(0,\Lambda)$,  the problem $(\ref{mainprob})$ admits at 
	least two distinct nontrivial weak solutions.
\end{theorem}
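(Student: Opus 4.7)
The plan is to set up the natural energy functional
\[
J_\lambda(u)=\!\int_{\R}\!\frac{|u(x)-u(y)|^{p(x,y)}}{p(x,y)|x-y|^{N+s(x,y)p(x,y)}}dxdy-\lambda\!\int_{\Omega}\!\frac{|u|^{\alpha(x)}}{\alpha(x)}dx-\frac{1}{2}\!\int_{\Omega}\!\int_{\Omega}\!\frac{F(x,u(x))F(y,u(y))}{|x-y|^{\mu(x,y)}}dxdy
\]
on $X_0$, and to produce two distinct critical points by combining Ekeland's variational principle (giving a local minimizer with negative energy) with a Mountain Pass argument (giving a solution of positive energy). This is the standard concave--convex scheme in the spirit of Ambrosetti--Brezis--Cerami, adapted to the variable order, variable exponent, nonlocal Choquard setting via the Hardy--Littlewood--Sobolev-type inequality from Theorem~\ref{HLS1}.

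First I would verify that $J_\lambda\in C^1(X_0,\mathbb{R})$ and that critical points are weak solutions in the sense of Definition~\ref{defi 2.2}. The geometry near the origin uses $\alpha^+<p^-\leq p^+<r^-$: applying Theorem~\ref{HLS1} to bound the Choquard term by a power of $\|u\|_{X_0}$ of order at least $2r^->2p^+$, and standard Sobolev embeddings for the concave term of order $\alpha^{\pm}<p^-$, one gets on a sufficiently small sphere $\{\|u\|_{X_0}=\rho\}$ an estimate of the form
\[
J_\lambda(u)\geq c_1\|u\|_{X_0}^{p^-}-c_2\lambda\|u\|_{X_0}^{\alpha^-}-c_3\|u\|_{X_0}^{2r^-},
\]
which, for $\rho$ and $\lambda$ small, yields $J_\lambda(u)\geq\eta>0$ on $\{\|u\|_{X_0}=\rho\}$ while $\inf_{\|u\|_{X_0}\leq\rho}J_\lambda<0$ (test with $t\varphi$ for $t\to 0^+$, where the concave term dominates because $\alpha^+<p^-$). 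This gives the threshold $\Lambda$ and, via Ekeland on $\overline{B_\rho}$ together with the $(S_+)$-type property of $\z$, a first nontrivial weak solution $u_1$ with $J_\lambda(u_1)<0$.

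For the second solution I would invoke the Mountain Pass Theorem. The (AR)-type condition (F2) with $\Theta>p^+$, combined with Theorem~\ref{HLS1} applied to $F$ rather than to powers (using (F1) to bound $|F(x,t)|\lesssim|t|^{r(x)}$), produces a ray $t\varphi_0$ along which $J_\lambda(t\varphi_0)\to-\infty$, giving $e\in X_0$ with $\|e\|_{X_0}>\rho$ and $J_\lambda(e)<0$. The Palais--Smale (or Cerami) condition at the mountain pass level $c>0$ is obtained in two stages: (i) boundedness of any $(PS)_c$ sequence $\{u_n\}$ is extracted by the usual combination $\Theta J_\lambda(u_n)-\langle J_\lambda'(u_n),u_n\rangle$, where the Choquard contribution enters with the good sign thanks to (F2) and the concave term is controlled since $\alpha^+<p^-<\Theta$; (ii) strong convergence follows from the reflexivity of $X_0$, compact embedding $X_0\hookrightarrow L^{r(\cdot)}(\Omega)$, weak continuity of the Choquard term through Theorem~\ref{HLS1}, and the $(S_+)$-property of the fractional $p(\cdot)$-Laplacian, yielding $u_n\to u_2$ strongly in $X_0$. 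The Mountain Pass Theorem then produces a critical point $u_2$ with $J_\lambda(u_2)=c>0$, which is distinct from $u_1$ because $J_\lambda(u_1)<0<J_\lambda(u_2)$.

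The main obstacle I expect is the Palais--Smale verification: the Choquard double integral, combined with variable exponents in both $p$, $s$, and $\mu$, forces one to pass to the limit in a nonlocal bilinear form, and the modular/norm discrepancy in $W^{s(\cdot),p(\cdot)}$ prevents direct use of uniform convexity. One must carefully split norms according to whether they exceed one, apply Theorem~\ref{HLS1} on both scales $q^+$ and $q^-$, and use the Brezis--Lieb-type decomposition for the Choquard nonlinearity to show the convergence of $\int\int F(x,u_n)F(y,u_n)/|x-y|^{\mu}$ together with its Gateaux derivative. Once this is handled, the concave term is routine and both solutions follow as sketched above.
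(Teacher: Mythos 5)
Your proposal follows essentially the same route as the paper: the concave--convex scheme for $J_\lambda$ on $X_0$, with the small-sphere geometry coming from Theorem~\ref{HLS1} and the assumption $\al^+<p^-<r^-$, a mountain-pass critical point at a positive level via $(F2)$ and a Palais--Smale argument of the type in Lemma~\ref{lem 5.2}, an Ekeland local minimizer in a small ball at a negative level, and distinctness from the sign of the energies. The only point to adjust is minor: for $\|u\|_{X_0}<1$ the modular estimate of Lemma~\ref{lem 3.1} gives the lower bound $\frac{1}{p^+}\|u\|_{X_0}^{p^+}$ rather than $c_1\|u\|_{X_0}^{p^-}$, which is harmless since $\al^-<p^+<2r^-$ still yields the required mountain-pass geometry for small $\la$.
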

\section{Fractional Sobolev spaces with variable order and variable exponents }\label{sec 3}
In this section, we introduce fractional Sobolev spaces with variable order and variable exponents and establish the preliminary lemmas and embeddings 
associated with these spaces.
For that, we assume that $s(\cdot, \cdot)$   and $p(\cdot, \cdot)$ satisfy  $(S1)$ and $(P1)$, respectively. 
We also assume that $\beta\in C_+(\overline{\Om})$.
Recalling the definition of the Lebesgue spaces with variable exponents 
in \cite{Fan1},
we introduce the  fractional Sobolev 
space with variable order and variable exponents as follows:
{\small\begin{align*}
		&W=W^{s(x,y),\beta(x),p(x,y)}(\Om)\\
		&:=\bigg \{ u\in L^{\beta(x)}(\Om):
		\int_{\Om\times\Om}\frac{| u(x)-u(y)|^{p(x,y)}}{\la^{p(x,y)}| x-y |^{N+s(x,y)p(x,y)}}dxdy<\infty,
		\text{ for some }\la>0\bigg\}.
\end{align*}}
We set 
$$[u]_{\Om}^{s(x,y), p(x,y)}:=\DD\inf \left\{\la >0 : \int_{\Om\times\Om}\frac{|
	u(x)-u(y)|^{p(x,y)}}{\la^{p(x,y)}| x-y |^{N+s(x,y)p(x,y)}}dxdy<1\right\} $$
	as semi-norm.
\noi Then $(W, \|\cdot\|_{W})$ is a reflexive Banach space equipped with the norm 
\begin{equation*}
	\| u \|_{W}:= \| u \|_{L^{\beta(x)}(\Om)}+
	[u]_{\Om}^{s(x,y), p(x,y)}.
\end{equation*}
\begin{remark}\label{ineq}
If $A_1\subseteq A_2$ are two bounded open sets in $\RR^N,$ then one can check that $$[u]_{A_1}^{s(x,y)p(x,y)}\leq [u]_{A_2}^{s(x,y)p(x,y)}.$$
\end{remark}
\noi We have the following  Sobolev-type embedding theorem for $W$.
\begin{theorem}\label{thm embd}
	Let $\Omega \subset \mathbb{R}^N$, $N\geq 2$ be a smooth bounded domain and $s(\cdot, \cdot),$ $p(\cdot, \cdot)$ 
	satisfy  $(S1)$ and  $(P1)$, respectively. Let
	$\beta\in C_+(\overline{\Om})$ such that $\beta(x)\geq p(x,x)$ for all $x\in \overline{\Om}.$
	Assume that $\ga\in C_{+}(\overline{\Om})$ 
such that $  \ga(x)<p_s^{*}(x)$
	for $x\in\overline \Omega$. Then
	there exits a constant $K=K(N,s,p,\beta,\ga,\Omega)>0$
	such that
	for every $u\in W$,
	$$
	\| u \|_{L^{\ga(x)}(\Omega)}\leq K \| u \|_{W}.$$			 			
	Moreover, this embedding is compact. 	 
	\end{theorem}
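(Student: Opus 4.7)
The plan is to reduce the variable-order, variable-exponent embedding to a finite family of classical constant-order, constant-exponent fractional Sobolev embeddings through a localization--patching argument. I first freeze the exponents on small balls covering $\overline{\Omega}$: since $\gamma(x)<p_s^{*}(x)$ at every point of $\overline{\Omega}$ and $\overline{\Omega}$ is compact, the joint continuity of $s,p,\gamma$ yields, for every $x_0\in\overline{\Omega}$, a radius $r_0<1/2$ together with constants $\tilde{s}(x_0)\in(0,s(x_0,x_0))$, $\tilde{p}(x_0)\in(1,p(x_0,x_0))$ and $\tilde{q}(x_0)$ satisfying $\tilde{s}\tilde{p}<N$ and $\tilde{q}<N\tilde{p}/(N-\tilde{s}\tilde{p})$, such that on $(B_{r_0}(x_0)\cap\overline{\Omega})\times(B_{r_0}(x_0)\cap\overline{\Omega})$ one has $s(x,y)\ge\tilde{s}$, $p(x,y)\ge\tilde{p}$, while $\gamma(x)\le\tilde{q}$ on $B_{r_0}(x_0)\cap\overline{\Omega}$. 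Extracting a finite subcover produces sets $U_i=B_{r_i}(x_i)\cap\Omega$, $i=1,\dots,m$, with associated constants $(\tilde{s}_i,\tilde{p}_i,\tilde{q}_i)$.

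The main obstacle is the local comparison: I need to show that for each $i$ and every $u\in W$
\begin{equation*}
[u]_{U_i}^{\tilde{s}_i,\tilde{p}_i}+\|u\|_{L^{\tilde{p}_i}(U_i)}\le C_i\,(\|u\|_{W}+1).
\end{equation*}
The Lebesgue part uses $\beta(x)\ge p(x,x)$: after further shrinking $\tilde{p}_i$ if needed so that $\tilde{p}_i\le\beta^{-}$, the inclusion $L^{\beta(x)}(U_i)\hookrightarrow L^{\tilde{p}_i}(U_i)$ is immediate on the bounded set $U_i$. For the Gagliardo seminorm I split the domain of integration into the large-jump set $A=\{(x,y)\in U_i\times U_i:|u(x)-u(y)|\ge 1\}$ and its complement. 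On $A$ the pointwise inequalities $\tilde{p}_i\le p(x,y)$ and $\tilde{s}_i\tilde{p}_i\le s(x,y)p(x,y)$, together with $\mathrm{diam}(U_i)<1$, dominate the integrand by its full variable-exponent counterpart, so this portion is absorbed into $[u]_{\Omega}^{s(x,y),p(x,y)}$. On the small-jump complement I apply a Young-type inequality with exponents $p(x,y)/\tilde{p}_i$ and its conjugate, exploiting the uniform continuity of $s,p$ to keep both exponents bounded away from $1$ and the integrability of $|x-y|^{-(N+\tilde{s}_i\tilde{p}_i/2)}$ on $U_i\times U_i$, in order to trade a fractional power of $|u(x)-u(y)|^{p(x,y)}$ against the remaining singularity, the residual Lebesgue contribution being absorbed into $\|u\|_{L^{\beta(x)}(\Omega)}$. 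Such two-region splittings are standard in the recent variable-exponent/variable-order fractional literature.

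To conclude, on the Lipschitz domain $U_i$ the classical Di~Nezza--Palatucci--Valdinoci embedding yields the compact inclusion $W^{\tilde{s}_i,\tilde{p}_i}(U_i)\hookrightarrow\hookrightarrow L^{\tilde{q}_i}(U_i)$ since $\tilde{q}_i<N\tilde{p}_i/(N-\tilde{s}_i\tilde{p}_i)$. Combined with the local estimate, this gives $\|u\|_{L^{\tilde{q}_i}(U_i)}\le C\|u\|_{W}$, and because $\gamma(x)\le\tilde{q}_i$ on the bounded set $U_i$, the elementary inclusion $L^{\tilde{q}_i}(U_i)\hookrightarrow L^{\gamma(x)}(U_i)$ yields $\|u\|_{L^{\gamma(x)}(U_i)}\le C\|u\|_{W}$. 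Summing over the finite cover delivers the continuous embedding $W\hookrightarrow L^{\gamma(x)}(\Omega)$. For compactness, any bounded sequence $\{u_n\}\subset W$ admits, via the local compact embeddings and a diagonal extraction over $i=1,\dots,m$, a subsequence converging in $L^{\tilde{q}_i}(U_i)$ for every $i$, hence in $L^{\gamma(x)}(U_i)$; summing over the finite cover gives convergence in $L^{\gamma(x)}(\Omega)$.
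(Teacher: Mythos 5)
Your overall skeleton is the same as the paper's (which follows \cite{KRV}): cover $\overline\Om$ by finitely many small pieces on which the order and exponent can be frozen strictly below their local values, invoke the classical constant-exponent embedding of \cite{HG} on each piece, and return to $L^{\ga(x)}$ and $L^{\beta(x)}$ by H\"older's inequality on the bounded pieces, with compactness obtained by a finite diagonal extraction. Where you genuinely depart is at the crux, the comparison of the frozen seminorm $[u]_{U_i}^{\tilde{s}_i,\tilde{p}_i}$ with the variable-exponent data: the paper does this by one application of H\"older's inequality in the variable-exponent space $L^{p(x,y)}$ with respect to the modified measure $d\tilde{\mu}(x,y)=|x-y|^{-(N+(t_i-s(x,y))p_i)}dxdy$, followed by an explicit Luxemburg-norm computation (see \eqref{0.12}--\eqref{0.15}), whereas you split into large and small jumps and use a pointwise Young inequality. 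Your device is more elementary (no variable-exponent H\"older, no auxiliary measure) and is viable.

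However, the small-jump step as written does not stand: $|x-y|^{-(N+\tilde{s}_i\tilde{p}_i/2)}$ is \emph{not} integrable on $U_i\times U_i$, since integrability near the diagonal requires an exponent strictly below $N$. The Young inequality with exponents $p(x,y)/\tilde{p}_i$ and $p(x,y)/(p(x,y)-\tilde{p}_i)$ in fact leaves the residual kernel
\begin{equation*}
|x-y|^{-\left(N-\frac{\tilde{p}_i\,p(x,y)\,\left(s(x,y)-\tilde{s}_i\right)}{p(x,y)-\tilde{p}_i}\right)},
\end{equation*}
which is integrable precisely because $\tilde{s}_i$ was frozen strictly below $s$; to make this uniform you must choose the balls so that $s(x,y)-\tilde{s}_i$ is bounded below and $p(x,y)-\tilde{p}_i$ is bounded above and below by positive constants on $U_i\times U_i$. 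Note also that this residual term is a constant independent of $u$, not a Lebesgue contribution absorbed into $\|u\|_{L^{\beta(x)}(\Om)}$. Two further repairs: (i) both your large-jump bound and the Young term control the \emph{modular} $\int\frac{|u(x)-u(y)|^{p(x,y)}}{|x-y|^{N+s(x,y)p(x,y)}}dxdy$ rather than the Luxemburg seminorm, so your local estimate is inhomogeneous; to reach the homogeneous conclusion $\|u\|_{L^{\ga(x)}(\Om)}\leq K\|u\|_W$ you should normalize to $\|u\|_W\leq 1$ (so the modular is bounded, by the analogue of Lemma \ref{lem 3.1} for the seminorm) and then scale. (ii) Shrinking $\tilde{p}_i\leq\beta^-$ is unnecessary and can be incompatible with keeping $\ga\leq\tilde{q}_i<N\tilde{p}_i/(N-\tilde{s}_i\tilde{p}_i)$ when $\ga(x_i)$ is close to $p_s^*(x_i)$ and $\beta^-$ is small (e.g.\ $\beta(x)=p(x,x)$); the hypothesis $\beta(x)\geq p(x,x)$ already gives $\tilde{p}_i\leq p(x,x)\leq\beta(x)$ on $U_i$, which is all the Lebesgue comparison needs, and is exactly how the paper uses \eqref{0.6}. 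With these corrections your argument goes through.
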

\begin{proof} Here we follow the approach as in \cite{KRV}. Since  $p,~\beta,~\ga,~s$ are continuous on the compact set $\overline{\Om}$, 
	it follows that\begin{align}\label{99}
	\DD\inf_{x\in\overline{\Om}}\Big\{\frac{Np(x,x)}{N-{s}(x,x){p}(x,x)}-\ga(x)\Big\}= k_1 > 0.
	\end{align} Using \eqref{99} and continuity of the functions $p,\beta,\ga$ and $s$, 
	we get a finite  family of disjoint open balls $\{B'_i\}_{i=1}^k$ with radius $\epsilon=\epsilon(p,\beta,\ga,s,k_1)$ 
	satisfying 
	$\overline{\Om}\subseteq\DD\cup_{i=1}^k~B'_i $  such that 
	\begin{align}\label{0.3}
		\frac{Np(z,y)}{N-s(z,y)p(z,y)}-\ga(x)= \frac {k_1}{2} > 0
	\end{align} 
	for all $(z,y)\in B_i\times B_i$ and $x\in B_i$, $i=1,2,\dots,k,$  where $B_i= \Om\cap B'_i ~$ for each $i=1,2,\dots,k.$ \\
	We set
	\begin{align}\label{0.1}
		p_i=\DD\inf_{(z,y)\in B_i\times B_i}(p(z,y)-\delta)
	\end{align}
	and
	\begin{align}\label{0.4}
		s_i=\DD\inf_{(z,y)\in B_i\times B_i}s(z,y). 
	\end{align}
	\noi Again by using continuity of $p,q,\ga$ and $s$ we can  choose $\delta=\delta(k_1)$, with $p^--1>\delta>0$, $t_i\in(0,s_i)$ and $ \epsilon >0$ 
	such that  \eqref{0.3}, \eqref{0.1} and \eqref{0.4} give us
	{\begin{align}\label{0.5}
			{p}_{t_i}^*:=\frac{Np_i}{N-t_ip_i}\geq\frac{k_1}{3}+\ga(x)
	\end{align}}
	and 
	{\begin{align}\label{0.6}
			\beta(x)\geq {p}(x,x)> p_i
	\end{align}}
	for all $x\in B_i,$ $i=1,2,\dots,k.$
	Indeed, from \eqref{0.1} as $ p_i=\DD\inf_{(z,y)\in B_i\times B_i}p(z,y)-\delta(k_1)<{p}(x,x)\leq \beta(x)$ for each $x\in B_i$, we have \eqref{0.6}.  
	Using  embedding results (Theorem 6.7, Theorem 6.9 in \cite{ HG}) for fractional Sobolev spaces, we get a constant  $C=C(N,p_i,t_i,\epsilon,B_i)>0$ such that
	{\begin{align}\label{0.7}
			\parallel u \parallel_{L^{{p}_{t_i}^*}(B_i)}\leq C \bigg\{\| u \|_{L^{p_i}(B_i)}+\bigg( \int_{B_i\times B_i}
			\frac{| u(x)-u(y)|^{p_i}}{| x-y|^{N+{t_i} {p_i}}}dxdy\bigg)^{\frac{1}{p_i}}\bigg\}.
	\end{align}} 
Since $|u(x)|=\DD\sum_{i=1}^k|u(x)|{X}_{B_i}$,
	we have
	{\begin{align}\label{0.8}
			\|u\|_{L^{\ga(x)}(\Om)}\leq \DD\sum_{i=1}^k\|u\|_{L^{\ga(x)}(B_i)}.
	\end{align}}
	From \eqref{0.5}, we get $\ga(x)<{p}_{t_i}^*$ for all $x\in B_i,$ $i=1,...,k$.
	Hence we can take $a_i\in C_+(\Om)$ such that $\frac{1}{\ga(x)}=\frac{1}{{p}_{t_i}^*}+\frac{1}{a_i(x)}.$ 
	Therefore by applying H\"{o}lder's inequality, we obtain
	{\begin{align}\label{0.9}
			\|u\|_{L^{\ga(x)}(B_i)}\leq k_2\|u\|_{L^{{p}_{t_i}^*}(B_i)}~\|1\|_{L^{a_i(x)}(B_i)}
			\leq k_3\|u\|_{L^{{p}_{t_i}^*}(B_i)},	
		\end{align} where the constants $k_2,k_3>0.$}
Hence from \eqref{0.8} and \eqref{0.9}, we deduce
	\begin{align}\label{0.10}
		\|u\|_{L^{\ga(x)}(\Om)}\leq k_4\DD\sum_{i=1}^k\|u\|_{L^{{p}_{t_i}^*}(B_i)},
	\end{align}
	where  $k_4>0$ is a constant.
	Again from \eqref{0.6}, we get $p_i<\beta(x)$ for all $x\in B_i$, $i=1,...,k$. Therefore by arguing in a similar way as above, we obtain  
	\begin{align}\label{0.11}
		\DD\sum_{i=1}^m\|u\|_{L^{p_i}(B_i)}\leq k_5 \|u\|_{L^{\beta(x)}(\Om)},
	\end{align} where $k_5>0$ is a constant.
	Next, for each $i=1,...,k$, we can choose $b_i\in C_+(B_i\times B_i)$ such that
	 \begin{align}\label{110}
	\frac{1}{p_i}=\frac{1}{p(x,y)}+\frac{1}{b_i(x,y)}.
	\end{align}
	We define a  measure in $B_i\times B_i$, as \begin{align}\label{222}
	\DD d\tilde{\mu}(x,y)=\frac{dxdy}{|x-y|^{N+(t_i-s(x,y))p_i}}.\end{align} Using  H\"{o}lder's inequality combining with \eqref{110} and \eqref{222}, 
	it follows that  there exist some constants $k_6,k_7>0$ such that
	{\begin{align}\label{0.12}
			&\bigg\{ \int_{B_i\times B_i}\frac{| u(x)-u(y)|^{p_i}}{| x-y|^{N+{t_i} {p_i}}}dxdy\bigg\}^{\frac{1}{p_i}}\nonumber\\
			&=\bigg\{ \int_{B_i\times B_i}\bigg(\frac{| u(x)-u(y)|}{| x-y|^{s(x,y)}}\bigg)^{p_i} \frac{dxdy}{|x-y|^{N+(t_i-s(x,y))p_i}}\bigg\}^{\frac{1}{p_i}}\nonumber\\
			&= \bigg[ \int_{B_i\times B_i}\big( U(x,y)\big)^{p_i} d\tilde{\mu}(x,y)\bigg]^{\frac{1}{p_i}}\nonumber\\
			&\leq k_6~\|U\|_{L^{p(x,y)}(\tilde{\mu},~ B_i\times B_i)} ~ \|1\|_{L^{b_i(x,y)}(\tilde{\mu},~ B_i\times B_i)}\nonumber\\
			&\leq k_7 ~\|U\|_{L^{p(x,y)}(\tilde{\mu},~ B_i\times B_i)},
	\end{align}}
	where the function $U$ is defined in $B_i\times B_i$ as $U(x,y)=\frac{| u(x)-u(y)|}{| x-y|^{s(x,y)}}, ~ x\neq y.$
	Now let $\la'>0$ be such that 
	{\begin{align}\label{0.13}
			\int_{B_i\times B_i}\frac{|u(x)-u(y)|^{p(x,y)}}{(\la')^{p(x,y)}| x-y|^{N+s(x,y)p(x,y)}}dxdy<1.
	\end{align}}
	Choose
	{\begin{align} \label{0.14}
			d_i:=\DD\sup\bigg\{  1, ~\DD\sup_{(x,y)\in B_i\times B_i}|x-y|^{s(x,y)-t_i}\bigg\}\text{~ and ~ } \overline{\la_i}=\la' d_i.
	\end{align}}
	Combining \eqref{0.13} and \eqref{0.14}, we deduce
	{\begin{align}
			&\int_{B_i\times B_i}\bigg(\frac{U(x,y)}{\overline{\la_i}}\bigg)^{p(x,y)} d\tilde{\mu}(x,y)\nonumber\\
			&=\int_{B_i\times B_i}\bigg(\frac{| u(x)-u(y)|}{\overline{\la_i}| x-y|^{s(x,y)}}\bigg)^{p(x,y)} \frac{dxdy}{|x-y|^{N+(t_i-s(x,y))p_i}}\nonumber\\
			&=\int_{B_i\times B_i}\frac{|x-y|^{(s(x,y)-t_i)p_i}}{{d_i}^{p(x,y)}}\frac{| u(x)-u(y)|^{p(x,y)}}{{(\la')}^{p(x,y)}| x-y|^{N+s(x,y)p(x,y)}}dxdy \nonumber\\
			&\leq\int_{B_i\times B_i} \frac{| u(x)-u(y)|^{p(x,y)}}{{(\la')}^{p(x,y)}| x-y|^{N+s(x,y)p(x,y)}}dxdy <1.\nonumber
	\end{align}}
	Thus from the above, we obtain
	$\|U\|_{L^{p(x,y)}(\tilde{\mu},~ B_i\times B_i)}\leq\overline{\la_i} = \la' d_i,
	$ which together with Remark \ref{ineq} implies
	{\begin{align}\label{0.15}
			\|U\|_{L^{p(x,y)}(\tilde{\mu},~ B_i\times B_i)}\leq k_8 [u]_{B_i}^{s(x,y),p(x,y)}
			\leq k_8 [u]_{\Om}^{s(x,y),p(x,y)},
	\end{align}}
	where $k_8=\DD\max_{\{i=1,2,\dots,k\}}\{d_i\}>1.$
	Taking into account \eqref{0.12} and \eqref{0.15}, we get
	$$\DD\bigg\{\DD \int_{B_i\times B_i}\frac{| u(x)-u(y)|^{p_i}}{| x-y|^{N+{t_i} {p_i}}}dxdy\bigg\}^{\frac{1}{p_i}}\leq k_8 [u]^{s(x,y), p(x,y)}_\Om,$$ which gives us
	{\begin{align}\label{0.16}
			\DD\sum_{i=1}^m \bigg\{\int_{B_i\times B_i}\frac{| u(x)-u(y)|^{p_i}}{| x-y|^{N+{t_i} {p_i}}}dxdy\bigg\}^{\frac{1}{p_i}}\leq k_{9} [u]^{s(x,y), p(x,y)}_\Om,
	\end{align}} where $k_9>0$ is a constant.
	Thus, using \eqref{0.10}, \eqref{0.11} and \eqref{0.16}, we deduce
	{\begin{align*}
			\|u\|_{L^{\ga(x)}(\Om)}&\leq k_4\DD\sum_{i=1}^k\|u\|_{L^{p_i^*}(B_i)}\\
			&\leq k_{10}\DD\sum_{i=1}^m\bigg\{\|u\|_{L^{p_i}(B_i)}+ \bigg(\int_{B_i\times B_i}\frac{| u(x)-u(y)|^{p_i}}{| x-y|^{N+{t_i} {p_i}}}dxdy\bigg)^{\frac{1}{p_i}}\bigg\}\\
			&\leq k_{11} \bigg\{ \|u\|_{L^{\beta(x)}(\Om)} +
			[u]^{s(x,y), p(x,y)}_\Om\bigg\}\\
			&=K(N,s,p,\beta,\ga,\Om) \|u\|_{W},
	\end{align*}} where the constants $k_{10},k_{11}$ and $K>0.$
	This proves that the space $W$ is continuously embedded in
	$L^{\ga(x)}(\Omega)$. The compactness of this embedding in the bounded domain $\Om$ can be established by 
	suitably  extracting a convergent subsequence in $L^{\ga(x)}(B_i)$ for each $i=1,...,k$
	of a bounded sequence 
	$\{u_m\}$ in $W$ and arguing as above.\end{proof}
\noi Next we state the Sobolev-type embedding theorem for $W^{s(x,y),p(x,x),p(x,y)}(\R)$. The proof follows using the similar arguments
as in \cite{ky ho}, 
where the authors have considered the case $s(x,y)=s$, constant.
	\begin{theorem}\label{Sobolev for unbounded}
	Let $s(\cdot, \cdot)$, $p(\cdot, \cdot)$ be uniformly continuous functions
	satisfying  $(S1)$ and  $(P1)$, respectively.
	Assume that $\ga\in C_{+}(\RR^N)$ is a uniformly continuous 
such that $ \ga(x)\geq p(x,x)$ for $x\in\RR^N$ and $\DD\inf_{x\in\RR^N}\left( p_s^{*}(x)-\gamma(x)\right)>0.$
Then $W^{s(x,y), p(x,x), p(x,y)}(\R)$ is continuously embedded into $ L^{\ga(x)}(\RR^N)$.
	 		\end{theorem}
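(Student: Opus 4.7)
The plan is to adapt the proof of Theorem \ref{thm embd}, replacing the finite covering of the bounded domain $\Om$ by a countable covering of $\RR^N$ of uniformly bounded multiplicity. The hypothesis that $s$, $p$ and $\ga$ are uniformly continuous on $\RR^N$ is precisely what will allow every local constant to be chosen uniformly across the cover, so that a single embedding constant emerges at the end.

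Setting $k_1 := \inf_{x\in\RR^N}(p_s^*(x) - \ga(x)) > 0$, I would first use uniform continuity to fix a single radius $\epsilon = \epsilon(s,p,\ga,k_1) > 0$ and a countable family of open balls $\{B_j\}_{j\in\NN}$ of radius $\epsilon$ covering $\RR^N$ with finite multiplicity $M$ (for instance a dilated dyadic lattice of cubes). On each $B_j$, setting $p_j := \inf_{B_j\times B_j}(p - \delta)$, $s_j := \inf_{B_j\times B_j} s$ and a suitable $t_j \in (0,s_j)$, the analogues of \eqref{0.3}, \eqref{0.5} and \eqref{0.6} hold, with $\delta$ depending only on $k_1$ and the moduli of continuity of $s$, $p$ and $\ga$.

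Next, repeating on each $B_j$ the chain of estimates \eqref{0.7}--\eqref{0.16} from the proof of Theorem \ref{thm embd}, I would obtain a uniform local estimate
\begin{equation*}
\|u\|_{L^{\ga(x)}(B_j)} \leq K_0 \bigl(\|u\|_{L^{p(x,x)}(B_j)} + [u]^{s(x,y),p(x,y)}_{B_j}\bigr),
\end{equation*}
with $K_0 > 0$ independent of $j$. Uniform continuity is essential here: otherwise the constants $C$, $d_i$, $k_i$ appearing in \eqref{0.7}--\eqref{0.15} could degenerate as $j\to\infty$. To pass from this family of local estimates to a single global estimate, I would work with modulars rather than norms. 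The finite multiplicity of the cover gives
\begin{equation*}
\int_{\RR^N} |u(x)/\la|^{\ga(x)}\,dx \leq \sum_j \int_{B_j}|u(x)/\la|^{\ga(x)}\,dx,
\end{equation*}
together with the analogous bounds for $\sum_j \int_{B_j}|u|^{p(x,x)}\,dx$ and for the Gagliardo modulars on $B_j\times B_j$. After normalizing $u$ by its $W$-norm and applying the standard modular-to-norm conversions in variable exponent spaces, these assemble into $\|u\|_{L^{\ga(x)}(\RR^N)} \leq K\,\|u\|_{W^{s(x,y),p(x,x),p(x,y)}(\RR^N)}$.

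The main obstacle I foresee is this summation step in the variable exponent setting. Unlike in $L^p$ with constant $p$, Luxemburg norms are not additive over a cover, so one must argue at the level of modulars, reduce to the unit ball of $W$ via scaling, and then invert the nonlinear modular--norm relation while carefully tracking the ranges $[\ga^-,\ga^+]$ and $[p^-,p^+]$ so that the final constant $K$ is independent of both $u$ and the particular ball $B_j$. The uniform continuity hypothesis, combined with uniform positivity of $p_s^*(x)-\ga(x)$, is what keeps this tractable.
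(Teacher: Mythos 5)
Your proposal is correct and is essentially the argument the paper has in mind: the paper offers no detailed proof of Theorem \ref{Sobolev for unbounded}, deferring to \cite{ky ho} (where $s$ is constant), and that proof is exactly the scheme you outline — a countable cover of $\RR^N$ by balls of fixed radius and bounded multiplicity, local constant-exponent fractional embeddings as in \eqref{0.7} with constants kept uniform by the uniform continuity of $s,p,\ga$ and by $\inf_{x\in\RR^N}(p_s^*(x)-\ga(x))>0$, and reassembly at the level of modulars (after normalizing to the unit ball of $W$) using $\ga(x)\ge p(x,x)$ and the bounded overlap of the cover. Your identification of the modular summation as the delicate step, and of uniform continuity as what prevents the local constants from degenerating, matches the intended adaptation of the bounded-domain proof of Theorem \ref{thm embd} to $\RR^N$.
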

For studying nonlocal problems 
involving the operator $(-\Delta)_{p(\cdot)}^{s(\cdot)}$ with Dirichlet boundary data via variational methods, we define another new fractional type 
Sobolev spaces with variable order and variable exponents. 
One can refer \cite{Bisci} and references therein for this type of spaces in fractional $p$-Laplacian framework.
Let $(S1)$, $(P1)$ hold true and the variable exponent $\beta\in C_+(\overline\Om)$ such that $\beta(x)\geq p(x,x)$ for all $x\in \overline{\Om}$.
Set $Q:=(\RR^N\times\RR^N)\setminus(\Om^c\times\Om^c)$ and define
{\small\begin{align*}
	 {X}&=X^{s(x,y),\beta(x),p(x,y)}(\Om)\\& :=\bigg\{u:\RR^N \rightarrow\mathbb R:u_{|_\Om}\in L^{\beta(x)}(\Om),\\
		&\;\;\;\;\;\;\;\;\int_Q\frac{|u(x)-u(y)|^{p(x,y)}}{\la^{p(x,y)}| x-y|^{N+s(x,y)p(x,y)}}dxdy<\infty,\text{ for some } \la>0\bigg\}.
\end{align*}}
The space ${X}$ is a normed linear space equipped with the following norm:
{\small$$\| u\|_{ {X}}:=\| u\|_{L^{\beta(x)}(\Om)}+\inf\Big\{\la>0:\int_Q\frac{|
		u(x)-u(y)|^{p(x,y)}}{\la^{p(x,y)}| x-y |^{N+s(x,y)p(x,y)}}dxdy<1\Big\}.$$}
Next we define the subspace ${X}_0$ of ${ X}$ as 
$${ X_0}={ X}_0^{s(x,y),\beta(x),p(x,y)}(\Om):=\{u\in { X}\;:\; u=0\;a.e.\; in\;\Om^c\}.$$
It can be verified that the following is a norm on ${ X}_0$, defined as :
\begin{align*}
	\| u\|_{{ X}_0}&:=\inf\Big\{\la>0:\int_Q\frac{|
		u(x)-u(y)|^{p(x,y)}}{\la^{p(x,y)}| x-y |^{N+s(x,y)p(x,y)}}dxdy<1\Big\}.
\end{align*}
\begin{remark}
For $u\in X_0,$ it follows that $$\int_{Q}\frac{|
	u(x)-u(y)|^{p(x,y)}}{\la^{p(x,y)}| x-y |^{N+sp(x,y)}}dxdy=\int_{\RR^N\times\RR^N}\frac{|
	u(x)-u(y)|^{p(x,y)}}{\la^{p(x,y)}| x-y |^{N+sp(x,y)}}dxdy.$$ Hence $$\| u\|_{{ X}_0}=\inf\Big\{\la>0:\int_{\RR^N\times\RR^N}\frac{|
	u(x)-u(y)|^{p(x,y)}}{\la^{p(x,y)}| x-y |^{N+s(x,y)p(x,y)}}dxdy<1\Big\}.$$
\end{remark}
\begin{definition}
For $u\in { X}_0$, we define the modular function $\rho_{{ X}_0}:{ X}_0\ra \RR$:
\begin{equation}\label{modular}
\rho_{{ X}_0}(u):=\int_{\RR^N \times \RR^N}\frac{|
	u(x)-u(y)|^{p(x,y)}}{| x-y |^{N+s(x,y)p(x,y)}}dxdy.\end{equation}
\end{definition}

The interplay between the norm in ${ X}_0$ and the modular function $\rho_{{ X}_0}$ can be studied in the following lemmas:		 	                                                       
\begin{lemma}\label{lem 3.1}
	Let $u \in { X}_0$ and $\rho_{{ X}_0}$ be  defined as in \eqref{modular}. Then we have the following results:
	\begin{enumerate}
		\item[(i)]$ \| u \|_{{ X}_0}<1(=1;>1) \iff \rho_{{ X}_0}(u)<1(=1;>1).$
		\item[(ii)] If $\| u \|_{{ X}_0}>1$, then
		$
		\| u \|_{{ X}_0} ^{p^{-}}\leq\rho_{{ X}_0}(u)\leq\| u \|_{{ X}_0}^{p{+}}.
		$
		\item[(iii)] If $\| u \|_{{ X}_0}<1$, then 
		$\| u \|_{{ X}_0} ^{p^{+}}\leq\rho_{{ X}_0}(u)\leq\| u \|_{{ X}_0}^{p{-}}.
		$
	\end{enumerate}
\end{lemma}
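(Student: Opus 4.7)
The plan is to reduce everything to the monotonicity and continuity of the auxiliary function
$$g(\lambda) := \int_{\mathbb{R}^N\times\mathbb{R}^N}\frac{|u(x)-u(y)|^{p(x,y)}}{\lambda^{p(x,y)}|x-y|^{N+s(x,y)p(x,y)}}\,dx\,dy,\qquad \lambda>0,$$
so that $\|u\|_{X_0}=\inf\{\lambda>0:g(\lambda)<1\}$ and $\rho_{X_0}(u)=g(1)$. This is the standard Luxemburg-norm-versus-modular technique adapted to the variable-order, variable-exponent setting; the only real work is to show the pieces assemble correctly when $p$ depends on $(x,y)$.

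First I would establish two elementary facts about $g$: \emph{strict monotonicity}, which is immediate pointwise from $\lambda_1<\lambda_2\Rightarrow\lambda_1^{p(x,y)}<\lambda_2^{p(x,y)}$; and \emph{continuity} on $(0,\infty)$, which follows from dominated convergence, using for each $\lambda_0>0$ the dominating function $c\cdot|u(x)-u(y)|^{p(x,y)}/|x-y|^{N+s(x,y)p(x,y)}$ with $c$ chosen from a fixed neighborhood of $\lambda_0$. Once these are in hand, I would prove the key intermediate claim: for $u\neq 0$, $g(\|u\|_{X_0})=1$. The inequality $g(\|u\|_{X_0})\le 1$ comes from picking $\lambda_n\downarrow\|u\|_{X_0}$ with $g(\lambda_n)<1$ and passing to the limit by continuity; the reverse $g(\|u\|_{X_0})\ge 1$ comes from the fact that $g(\lambda)\ge 1$ for $\lambda<\|u\|_{X_0}$ (else $\lambda$ would lie in the defining set), combined with continuity from below.

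Part (i) then follows directly. If $\|u\|_{X_0}<1$, choose $\lambda\in(\|u\|_{X_0},1)$ with $g(\lambda)<1$; monotonicity gives $\rho_{X_0}(u)=g(1)<g(\lambda)<1$. Conversely, $g(1)<1$ places $1$ in the defining set, so $\|u\|_{X_0}\le 1$, and strictness is upgraded by continuity (if $\|u\|_{X_0}=1$ then $g(1)=1$, contradicting $g(1)<1$). The case $\|u\|_{X_0}=1$ gives $g(1)=1$ by the intermediate claim, and conversely $g(1)=1$ forces $\|u\|_{X_0}=1$ via strict monotonicity. The $>1$ case is symmetric.

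For (ii) and (iii), set $\lambda_0=\|u\|_{X_0}$ so that $g(\lambda_0)=1$, i.e.
$$\int_{\mathbb{R}^N\times\mathbb{R}^N}\frac{|u(x)-u(y)|^{p(x,y)}}{\lambda_0^{p(x,y)}|x-y|^{N+s(x,y)p(x,y)}}\,dx\,dy=1.$$
If $\lambda_0>1$ then $\lambda_0^{p^-}\le\lambda_0^{p(x,y)}\le\lambda_0^{p^+}$ pointwise; multiplying the displayed identity by $\lambda_0^{p^-}$ and $\lambda_0^{p^+}$ respectively (after moving the $\lambda_0^{p(x,y)}$ outside the integrand via the bounds) yields $\lambda_0^{p^-}\le\rho_{X_0}(u)\le\lambda_0^{p^+}$. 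If $\lambda_0<1$ the inequalities $\lambda_0^{p^+}\le\lambda_0^{p(x,y)}\le\lambda_0^{p^-}$ reverse, giving the corresponding two-sided estimate in (iii). The main (mild) obstacle is the intermediate claim $g(\|u\|_{X_0})=1$, because the defining infimum uses a strict inequality; this is handled by the continuity argument above and is where the non-atomic nature of the measure $|x-y|^{-N-s(x,y)p(x,y)}\,dx\,dy$ is implicitly used to rule out degenerate jumps.
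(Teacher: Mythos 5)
Your argument is correct and is essentially the standard modular-versus-Luxemburg-norm proof that the paper itself invokes (it simply cites Theorems 3.1--3.2 of Fan--Zhao): strict monotonicity and continuity of $g(\lambda)=\rho_{X_0}(u/\lambda)$ plus the identity $g(\|u\|_{X_0})=1$ for $u\neq 0$ yield (i)--(iii) exactly as you describe. Two cosmetic points only: the trivial case $u=0$ in (i) and (iii) should be recorded separately (your argument assumes $\|u\|_{X_0}>0$), and continuity of $g$ needs no appeal to non-atomicity of the measure --- dominated convergence of $\lambda\mapsto\lambda^{-p(x,y)}$ on a compact neighborhood of $\lambda_0$, together with the fact that finiteness of the modular for one $\lambda$ implies it for all $\lambda>0$, already suffices.
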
		 	
\begin{lemma}\label{lem 3.2}
	Let $u,u_{m}  \in { X}_0$, $m\in\mathbb N$. Then the following statements are equivalent:
	\begin{enumerate}
		\item[(i)] $\DD{\lim_{m\ra \infty} }\| u_{m} - u \|_{{ X}_0} =0,$
		\item[(ii)] $\DD{\lim_{m\ra \infty}} \rho_{{ X}_0}(u_{m} -u)=0.$
	\end{enumerate}
\end{lemma}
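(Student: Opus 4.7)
The plan is to derive Lemma \ref{lem 3.2} as a direct corollary of Lemma \ref{lem 3.1}, exploiting the two-sided modular/norm comparison that is available in the sub-unit regime. Since both convergences we wish to compare are asymptotic statements with limit $0$, we may restrict attention to the case in which $\|u_m - u\|_{X_0} < 1$ (respectively $\rho_{X_0}(u_m - u) < 1$) for all sufficiently large $m$, and invoke part (iii) of Lemma \ref{lem 3.1} applied to the element $v_m := u_m - u \in X_0$.

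For the implication (i) $\Rightarrow$ (ii), I would argue as follows. Assume $\|u_m - u\|_{X_0} \to 0$. Then there exists $m_0 \in \mathbb{N}$ such that $\|v_m\|_{X_0} < 1$ for every $m \geq m_0$. By Lemma \ref{lem 3.1}(iii),
\begin{equation*}
\rho_{X_0}(v_m) \leq \|v_m\|_{X_0}^{p^-} \qquad \text{for all } m \geq m_0.
\end{equation*}
Since $p^- > 1$, the right-hand side tends to $0$ as $m \to \infty$, giving $\rho_{X_0}(u_m - u) \to 0$.

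For the converse (ii) $\Rightarrow$ (i), I would proceed symmetrically. Assume $\rho_{X_0}(v_m) \to 0$. Then for $m$ large enough we have $\rho_{X_0}(v_m) < 1$, and by Lemma \ref{lem 3.1}(i) this forces $\|v_m\|_{X_0} < 1$ as well. Applying Lemma \ref{lem 3.1}(iii) once more yields
\begin{equation*}
\|v_m\|_{X_0}^{p^+} \leq \rho_{X_0}(v_m),
\end{equation*}
hence $\|u_m - u\|_{X_0} \leq \rho_{X_0}(v_m)^{1/p^+} \to 0$ as $m \to \infty$.

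There is no genuine obstacle once Lemma \ref{lem 3.1} is in hand; the only point requiring attention is the need to split into regimes according to whether the norm (or modular) is above or below $1$, which is why the sharp implication is proved in the sub-unit regime only. All the real work, namely the Luxemburg-norm computation establishing the sandwich inequalities between $\rho_{X_0}$ and powers of $\|\cdot\|_{X_0}$, was already absorbed into Lemma \ref{lem 3.1}.
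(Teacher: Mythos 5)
Your proof is correct, and it follows essentially the same route the paper intends: the paper does not write out an argument but defers to Theorem 3.2 of Fan--Zhao, whose proof is exactly this reduction to the modular--norm inequalities of Lemma \ref{lem 3.1} (there Theorem 3.1) applied to $v_m=u_m-u$ in the sub-unit regime. Nothing further is needed.
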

\noi The proofs of Lemma \ref{lem 3.1} and Lemma \ref{lem 3.2}  follow in the same line as the proofs of 
Theorem 3.1 and Theorem 3.2, respectively, in \cite{Fan1}. \\
\noi{Now, we study the following}
Sobolev-type embedding theorem for the space ${ X}_0$. The proof of this theorem is motivated from \cite{AB}, where the author studied the result for $s(x,y)=s,$ constant. 
\begin{theorem}\label{prp 3.3}
Let $\Om$ be a smooth bounded domain in $\RR^N, N\geq2$
and $s(\cdot,\cdot)$ and $p(\cdot,\cdot)$ satisfy $(S1)$ and $(P1),$ respectively. Let
	$\beta\in C_+(\overline{\Om})$ such that $ p(x,x)\leq \beta (x)<p^*_s(x)$ for all $x\in \overline{\Om}.$  		
	Then for any  $\ga\in C_+(\overline{\Om})$  with $1<\ga(x)< p_s^*(x)$ for all $x\in \overline{\Om}$,
	there exits a constant $C=C(N,s,p,\ga,\beta,\Omega)>0$
	such that for every $u\in{ X}_0$, 
	\begin{equation*}
	\| u \|_{L^{\ga(x)}(\Omega)}\leq C \| u \|_{{ X}_0}.
	\end{equation*}
	Moreover, this embedding is
	compact.
\end{theorem}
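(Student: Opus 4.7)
The strategy is to reduce the embedding $X_0 \hookrightarrow L^{\gamma(x)}(\Omega)$ to the one already established in Theorem \ref{thm embd} by passing to a slightly larger auxiliary domain. I would first fix a smooth bounded open set $\widetilde{\Omega}$ with $\overline{\Omega} \subset \widetilde{\Omega}$ and $\mathrm{dist}(\partial\Omega, \partial\widetilde{\Omega}) > 0$. Given $u \in X_0$, since $u = 0$ a.e.\ on $\Omega^c$, I would show that $u|_{\widetilde{\Omega}}$ lies in $W := W^{s(x,y),\beta(x),p(x,y)}(\widetilde{\Omega})$ with norm controlled by $\|u\|_{X_0}$. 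The Gagliardo-type seminorm part is immediate: any pair $(x,y) \in \widetilde{\Omega} \times \widetilde{\Omega}$ either lies in $Q$, or lies in $\Omega^c \times \Omega^c$ where the integrand $|u(x)-u(y)|^{p(x,y)}$ vanishes, so
\begin{equation*}
[u]_{\widetilde{\Omega}}^{s(x,y),p(x,y)} \leq \|u\|_{X_0}.
\end{equation*}
Since also $\|u\|_{L^{\beta(x)}(\widetilde{\Omega})} = \|u\|_{L^{\beta(x)}(\Omega)}$, the only remaining piece is to control the latter by $\|u\|_{X_0}$.

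The main obstacle is therefore the Poincar\'e-type inequality
\begin{equation*}
\|u\|_{L^{\beta(x)}(\Omega)} \leq C\, \|u\|_{X_0}, \qquad u \in X_0,
\end{equation*}
which crucially exploits the Dirichlet condition $u \equiv 0$ on $\Omega^c$. To prove it, I would fix a ball $B \subset \widetilde{\Omega} \setminus \overline{\Omega}$; then for $x \in \Omega$ and $y \in B$, $u(y)=0$ and $|x-y|$ is pinched between two positive constants depending only on $B$ and $\Omega$. Hence $|x-y|^{-(N+s(x,y)p(x,y))} \geq c_0 > 0$ uniformly in such $(x,y)$, and
\begin{equation*}
c_0 \int_{\Omega} \int_B |u(x)|^{p(x,y)} \, dy\, dx \leq \int_{\Omega} \int_B \frac{|u(x)-u(y)|^{p(x,y)}}{|x-y|^{N+s(x,y)p(x,y)}} \, dx\, dy \leq \rho_{X_0}(u).
\end{equation*}
Splitting $\Omega$ into $\{|u| \geq 1\}$ and $\{|u| < 1\}$ to handle the $y$-dependence of the exponent $p(x,y)$, and invoking Lemma \ref{lem 3.1}, I would extract a modular estimate of the form $\int_{\Omega}\min\{|u|^{p^+}, |u|^{p^-}\}\, dx \leq C\, \rho_{X_0}(u)$; together with $p^- \leq p(x,x) \leq \beta(x) \leq \beta^+$ and $|\Omega| < \infty$, this yields the required bound on $\|u\|_{L^{\beta(x)}(\Omega)}$.

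Combining the two steps, $u|_{\widetilde{\Omega}} \in W$ with $\|u\|_{W} \leq C\|u\|_{X_0}$, and Theorem \ref{thm embd} applied on $\widetilde{\Omega}$ with the given exponent $\gamma$ then yields
\begin{equation*}
\|u\|_{L^{\gamma(x)}(\Omega)} \leq \|u\|_{L^{\gamma(x)}(\widetilde{\Omega})} \leq K\, \|u\|_{W} \leq C'\, \|u\|_{X_0},
\end{equation*}
which is the continuous embedding. For compactness, any bounded sequence $\{u_m\} \subset X_0$ produces a bounded sequence $\{u_m|_{\widetilde{\Omega}}\}$ in $W$, which by the compactness asserted in Theorem \ref{thm embd} has a subsequence converging in $L^{\gamma(x)}(\widetilde{\Omega})$; restricting to $\Omega$ gives convergence in $L^{\gamma(x)}(\Omega)$. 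The hardest step is the variable-exponent Poincar\'e inequality above, which requires a careful case analysis to deal with the joint variability of $s(x,y)$ and $p(x,y)$ inside the singular integral, extending the constant-order argument in \cite{AB}.
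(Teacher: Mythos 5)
Your overall scheme (pass to a larger domain, compare the Gagliardo seminorm over $\widetilde{\Omega}\times\widetilde{\Omega}$ with $\|u\|_{X_0}$, then invoke Theorem \ref{thm embd}) is structurally the same as the paper's, which works directly on $\Omega$ using $[u]_{\Omega}^{s(x,y),p(x,y)}\le \|u\|_{X_0}$. The problem is the step you yourself identify as the crux, the Poincar\'e-type inequality $\|u\|_{L^{\beta(x)}(\Omega)}\le C\|u\|_{X_0}$: your ball argument does not prove it. For $x\in\Omega$ and $y\in B\subset\Omega^c$ the exponents you see are $p(x,y)$ with $y$ bounded away from $x$; they lie between $p^-$ and $p^+$ (and are unrelated to $p(x,x)$), so after the splitting you control at most $\int_{\Omega}\min\{|u|^{p^+},|u|^{p^-}\}\,dx$, essentially an $L^{p^-}$-type modular. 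From this the $L^{\beta(x)}$ bound does not follow: $\beta$ is only assumed to satisfy $p(x,x)\le\beta(x)<p_s^*(x)$, so $\beta$ may exceed $p^+$ on much of $\Omega$, and on a set of finite measure H\"older's inequality goes the other way (higher exponents control lower ones, never the reverse). The sentence ``together with $p^-\le\beta(x)$ and $|\Omega|<\infty$ this yields the required bound on $\|u\|_{L^{\beta(x)}(\Omega)}$'' is therefore exactly backwards, and consequently the key estimate $\|u\|_{W}\le C\|u\|_{X_0}$ is not established.

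The paper proves precisely this inequality, but by a genuinely different mechanism: it sets $A=\{u\in X_0:\|u\|_{L^{\beta(x)}(\Omega)}=1\}$ and shows that $C'=\inf_{u\in A}\|u\|_{X_0}$ is attained, by taking a minimizing sequence, using its boundedness in $W$, the compactness of $W\hookrightarrow L^{\beta(x)}(\Omega)$ from Theorem \ref{thm embd}, and Fatou's lemma for the Gagliardo modular; the minimizer has unit $L^{\beta(x)}$-norm, hence is nontrivial, so $C'>0$ and $\|u\|_{L^{\beta(x)}(\Omega)}\le (1/C')\|u\|_{X_0}$ for all $u\in X_0$. A quantitative repair of your route would require upgrading your low-exponent bound on each small ball $B_i$ (e.g.\ a fractional Sobolev--Poincar\'e inequality controlling the mean of $u$ by the $L^{p^-}$ information and the oscillation by the seminorm), but no such argument appears in your proposal. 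The remaining parts of your plan (extending $\gamma$, $\beta$ continuously so that the exponent restrictions hold on $\widetilde{\Omega}$, applying Theorem \ref{thm embd} there, and obtaining compactness by restriction of subsequences) are fine once the Poincar\'e inequality is available.
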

\begin{proof} First we note that, as $\beta$ is continuous on $\overline{\Om}$, using Tietze extension theorem,
	we can extend $\beta$ on  $\mathbb{R}^N$ continuously such that $\beta$ satisfies ${p}(x,x)\leq \beta(x)< p^*_s(x)$  for all $x\in \RR^N.$
	Also $\ga\in C_+(\overline\Om)$ can be extended continuously  on $\RR^N$ such that $1<\gamma(x)< p_s^*(x)$ for all $x\in \RR^N.$				 
	Next, we claim that there exists a constant $C'>0$ such that
	{\begin{align}\label{1.1}
			\|u\|_{L^{\beta(x)}(\Om)}\leq\frac{1}{C'}\|u\|_{{ X}_0} \text{ for all } u\in { X}_0.
	\end{align} }
	This is equivalent to proving that, for $A:=\{u\in{ X}_0:\| u\|_{L^{\beta(x)}(\Om)}=1\}$, $\DD\inf_{u\in A} \| u\|_{{ X}_0}$ is achieved.
	Let $\{u_m\}\subset A$ be a minimizing sequence, that is,  $\| u_m\|_{{ X}_0}\downarrow\inf_{u\in A} \| u\|_{{ X}_0}:=C'$ as $m\ra\infty.$
	This implies that $\{u_m\}$ is bounded in ${ X}_0$ and $L^{\beta(x)}(\Om)$ and hence in $W.$ Therefore, up to a subsequence $u_m\rightharpoonup u_0$ in $W$ as $m\ra\infty.$ 
	Now from Theorem \ref{thm embd}, it follows that $u_m \ra u_0$ strongly in $L^{\beta(x)}(\Om)$ as $m\ra\infty.$
	We extend $u_0$ to $\RR^N$ by setting $u_0(x)=0$ on $x\in \Om ^c.$ This implies $u_m(x) \ra u_0(x)$ a.e. $x\in \RR^N$ as $m\ra\infty.$ Hence by using Fatou's Lemma, we have
	{\small \begin{align*}
			\int_{\RR^N \times \RR^N}\frac{| u_0(x)-u_0(y)|^{p(x,y)}}{| x-y|^{N+s(x,y)p(x,y)}}dxdy \leq 
			\liminf _{m\ra\infty}\int_{\RR^N \times \RR^N}\frac{| u_m(x)-u_m(y)|^{p(x,y)}}{| x-y|^{N+s(x,y)p(x,y)}}dxdy,
	\end{align*}}
	which implies that $\|u_0\|_{{ X}_0}\leq \DD{\lim\inf}_{m\ra\infty}\|u_m\|_{{ X}_0}=C'$ and thus $u_0\in { X}_0$. Also, as
	$\| u_0\|_{L^{\beta(x)}(\Om)}=1$, we get $u_0\in A$. Therefore $\|u_0\|_{{ X}_0}=C'$. 
	This proves our claim and hence \eqref{1.1}.
	From \eqref{1.1}, it follows that 
	{\small \begin{align*}\|u\|_{W}=\|u\|_{L^{\beta(x)}(\Om)}+[u]_{\Om}^{s(x,y), p(x,y)}\leq \|u\|_{L^{\beta(x)}(\Om)}+\|u\|_{{ X}_0}\leq(1+\frac{1}{C'})\|u\|_{{ X}_0},\end{align*}}
	which implies that ${ X}_0$ is continuously embedded in $W$. As Theorem \ref{thm embd} gives that $W$ is continuously embedded in  $L^{\ga(x)}(\Om)$, it follows that there
	exists a constant $C(N,s,p,\ga,\beta,\Om)$ $>0$ such that  
	{ \begin{align}\label{0.0.0}
	\| u \| _{L^{\ga(x)}(\Om)}\leq C(N,s,p,\gamma,\beta,\Om)\| u \| _{{ X}_0}.
	\end{align} }
	To prove that the embedding given in \eqref{0.0.0} is compact, let $\{v_m\}$ be a bounded sequence in ${ X}_0 $.
	This implies that $\{v_m\}$ is bounded in $W$. 
	Hence using Theorem \ref{thm embd}, we infer that there exists $v_0\in L^{\ga(x)}(\Om)$ such that up to a
	subsequence $v_m\ra v_0$ strongly in $L^{\gamma(x)}(\Om)$ as $m\ra\infty$. This completes the theorem.\end{proof}
Using Theorem \ref{prp 3.3} together with the fact that $X_0$ is a closed subspace of
the reflexive space $W^{s(x,y),\beta(x),p(x,y)}(\RR^N)$ with respect to the norm $\|\cdot\|_{X_0}$, we have the following proposition.
\begin{proposition}\label{reflx}
	$({ X}_0, \|\cdot\|_{{ X}_0})$ is a uniformly convex and reflexive Banach space.
\end{proposition}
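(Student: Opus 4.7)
The strategy is to realise $(X_0, \|\cdot\|_{X_0})$ as (isometrically isomorphic to) a closed subspace of a variable-exponent Musielak--Orlicz space on which uniform convexity and reflexivity are classical, and then transfer both properties via the closed-subspace principle.

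Step one is to construct a linear isometry $T : X_0 \to Y$, where $Y := L^{p(x,y)}(Q, d\mu)$ is the Musielak--Orlicz space built from the weighted measure $d\mu(x,y) := \frac{dx\,dy}{|x-y|^{N+s(x,y)p(x,y)}}$ and equipped with its Luxemburg norm. Setting $T(u)(x,y) := u(x) - u(y)$, the two infima defining $\|u\|_{X_0}$ and $\|T(u)\|_Y$ coincide, so $T$ is indeed a linear isometry. Because $(P1)$ forces $1 < p^- \leq p^+ < \infty$, the ambient space $Y$ is both uniformly convex and reflexive; this is the standard variable-exponent analogue of the Clarkson inequality for $L^p$-spaces.

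Step two is to show that $T(X_0)$ is closed in $Y$, equivalently that $(X_0, \|\cdot\|_{X_0})$ is complete. Given a Cauchy sequence $\{u_n\}\subset X_0$, Theorem \ref{prp 3.3} (applied with $\ga = \beta$) yields that $\{u_n\}$ is Cauchy in $L^{\beta(x)}(\Om)$ and hence converges to some $u\in L^{\beta(x)}(\Om)$; extending $u$ by zero on $\Om^c$ produces a candidate limit. Passing to an a.e.\ pointwise convergent subsequence and applying Fatou's lemma to the modular $\rho_{X_0}$ gives $u\in X_0$, after which Lemma \ref{lem 3.2} upgrades modular convergence to norm convergence, so $u_n\to u$ in $X_0$. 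As a closed subspace of the uniformly convex and reflexive Banach space $Y$, the image $T(X_0)$ is itself uniformly convex and reflexive, and the linear isometry $T$ transfers both properties back to $(X_0, \|\cdot\|_{X_0})$.

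The main obstacle is Step two: showing that the limit of an $\|\cdot\|_{X_0}$-Cauchy sequence genuinely lies in $X_0$, i.e.\ that it vanishes a.e.\ outside $\Om$ and has finite modular $\rho_{X_0}$. Theorem \ref{prp 3.3} is essential here, as it delivers the Poincar\'e-type estimate $\|u\|_{L^{\beta(x)}(\Om)} \leq C\|u\|_{X_0}$ that makes it possible to extract an $L^{\beta(x)}$-limit in the first place, while Lemma \ref{lem 3.2} provides the rigorous bridge between modular and norm topologies. A conceptual subtlety to flag is that uniform convexity is a norm-dependent (not merely topological) property, so the argument has to be routed through an isometric embedding as above, rather than through the continuous topological inclusion $X_0 \hookrightarrow W^{s(x,y),\beta(x),p(x,y)}(\RR^N)$ with its potentially non-isometric equivalent norm.
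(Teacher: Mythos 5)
Your proof is correct, but it follows a genuinely different route from the paper. The paper disposes of the proposition in one line: it invokes Theorem \ref{prp 3.3} together with the observation that $X_0$ is a closed subspace of the reflexive space $W^{s(x,y),\beta(x),p(x,y)}(\RR^N)$, and transfers both properties from there. You instead realise $X_0$ isometrically inside the weighted variable-exponent Lebesgue space $L^{p(x,y)}(Q,d\tilde\mu)$ with $d\tilde\mu=|x-y|^{-N-s(x,y)p(x,y)}dx\,dy$ via $T(u)(x,y)=u(x)-u(y)$, prove completeness of $X_0$ (hence closedness of $T(X_0)$) through the Poincar\'e-type inequality of Theorem \ref{prp 3.3}, a.e.\ convergence, Fatou applied to the modular, and Lemma \ref{lem 3.2}, and then pull back uniform convexity and reflexivity from the closed subspace $T(X_0)$. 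The comparison is instructive: the paper's argument settles reflexivity cheaply (reflexivity is an isomorphic property, so it survives the equivalent-norm identification of $\|\cdot\|_{X_0}$ with the restriction of $\|\cdot\|_{W(\RR^N)}$), but, as you rightly flag, uniform convexity is norm-sensitive and does not pass through an equivalent but non-isometric embedding, so the paper's stated justification really only covers uniform convexity if one tacitly has in mind precisely the kind of isometric identification you construct. Your route delivers both properties for the actual norm $\|\cdot\|_{X_0}$, at the modest extra cost of checking that $\tilde\mu$ is $\sigma$-finite on $Q$ and quoting uniform convexity of variable-exponent Lebesgue spaces over such measure spaces for $1<p^-\leq p^+<\infty$ (available in \cite{Die}); the only cosmetic slip is that the Fatou step already yields $\|u_n-u\|_{X_0}\to 0$ directly, so the appeal to Lemma \ref{lem 3.2} is redundant rather than load-bearing, and note that Lemma \ref{lem 3.2} can only be applied after the Fatou step has placed the limit in $X_0$.
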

\begin{remark}\label{solution space}	
	From now onwards we take $\beta(x)=p(x,x)$ and consider the function space\\ $X_0^{s(x,y),p(x,x),p(x,y)}(\Om)$ as the solution space for problem \eqref{mainprob1} 
	and \eqref{mainprob}.	
	For brevity, we still denote the space $X_0^{s(x,y),{p}(x.x),p(x,y)}(\Om)$ by $X_0$.
\end{remark}
\section{Proofs of main theorems}\label{sec 4}
In Proposition 2.4 in \cite{Alves5}, Alves et al. have established the Hardy-Sobolev-Littlewood-type result for variable exponents.
Here we establish the analogous result appropriate to prove \eqref{hls1} for functions in $X_0$.
\begin{proposition}
 Let $s,p,q,\mu$ and $r$ be as in Theorem \ref{HLS1}. Let there exists sequences $\{(x_n,y_n)\}$ and $\{(x_n^\prime,y_n^\prime)\}$ in $\R$ such that
 $\DD\lim_{n\rightarrow\infty}p(x_n,y_n)=p^+$, $\DD\lim_{n\rightarrow\infty}q(x_n,y_n)=q^+$ and
 $\DD\lim_{n\rightarrow\infty}p(x_n^\prime,y_n^\prime)=p^-$, $\DD\lim_{n\rightarrow\infty}q(x_n^\prime,y_n^\prime)=q^-$.
 Then for $h\in L^{p^+}(\RR^N)\cap L^{p^-}(\RR^N)$ and $g \in L^{q^+}(\RR^N)\cap L^{q^-}(\RR^N)$  we have
 {\small\begin{equation*}
  \label{corHLS1}  
 \left|\int_{\RR^N}\int_{\RR^N}\frac{hg}{|x-y|^{\mu(x,y)}}dxdy\right|\leq 
 c_1\left(\|h\|_{L^{p^+}(\RR^N)}\|g\|_{L^{q^+}(\RR^N)}+\|h\|_{L^{p^-}(\RR^N)}\|g\|_{L^{q^-}(\RR^N)}\right),
\end{equation*}}
where $c_1>0$ is a constant, independent of $h$ and $g$. 
\end{proposition}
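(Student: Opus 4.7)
The plan is to mimic the strategy used to prove Proposition 2.4 in \cite{Alves5} and reduce the bound to the classical (constant-exponent) Hardy--Littlewood--Sobolev inequality by splitting the integration domain according to whether $|x-y|\leq 1$ or $|x-y|>1$.

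Concretely, I would decompose $\R = A \cup B$ with $A := \{(x,y)\in\R : |x-y|\leq 1\}$ and $B := \R\setminus A$. The elementary monotonicity of $t\mapsto t^{\alpha}$ in $\alpha$ (decreasing for $0<t\leq 1$, increasing for $t\geq 1$), combined with the bounds $\mu^-\leq \mu(x,y)\leq \mu^+$ coming from $(\mu 1)$, yields
\begin{equation*}
|x-y|^{-\mu(x,y)}\leq |x-y|^{-\mu^+}\ \text{on } A,\qquad |x-y|^{-\mu(x,y)}\leq |x-y|^{-\mu^-}\ \text{on } B.
\end{equation*}
Consequently the left-hand side of the claimed inequality is dominated by the sum of two integrals with constant-exponent kernels $|x-y|^{-\mu^+}$ and $|x-y|^{-\mu^-}$, each taken over all of $\RR^N\times\RR^N$.

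To each of these two pieces I would then apply the classical HLS inequality, the first with exponents $(p^+,q^+,\mu^+)$ and the second with $(p^-,q^-,\mu^-)$. Hypothesis $(\mu 1)$ gives $\mu^\pm\in(0,N)$, and $(P1)$ together with $q\in C_+$ guarantee $p^\pm,q^\pm\in(1,\infty)$. The sequence hypothesis enters precisely here: passing to the limit along $\{(x_n,y_n)\}$ and $\{(x_n',y_n')\}$ in the pointwise HLS scaling identity underlying Theorem \ref{HLS1} delivers
\begin{equation*}
\tfrac{1}{p^+}+\tfrac{1}{q^+}+\tfrac{\mu^+}{N}=2\qquad\text{and}\qquad \tfrac{1}{p^-}+\tfrac{1}{q^-}+\tfrac{\mu^-}{N}=2,
\end{equation*}
which is exactly the scaling compatibility needed to invoke the classical HLS inequality with those constant exponents. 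Summing the two bounds and setting $c_1:=\max\{C_+,C_-\}$ (where $C_\pm$ are the classical HLS constants) yields the claimed estimate.

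The main obstacle is exactly the last step: without the sequence hypothesis there is no reason for the triples $(p^\pm,q^\pm,\mu^\pm)$ to satisfy the HLS scaling, and the splitting argument would collapse. The requirement that the sup (resp. inf) of both $p$ and $q$ be attained in the limit along a \emph{single} sequence is what allows the pointwise relation to pass to the limit correctly. Once this compatibility is secured, the rest of the proof is a direct combination of the domain split, the two kernel bounds, and the classical HLS inequality.
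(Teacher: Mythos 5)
Your proposal is correct and follows essentially the same route as the paper: the paper also bounds the kernel $|x-y|^{-\mu(x,y)}$ by $|x-y|^{-\mu^+}+|x-y|^{-\mu^-}$ (the dichotomy $|x-y|\leq 1$ versus $|x-y|>1$ that you make explicit), uses the sequence hypothesis to show $\mu^{\pm}=2N\bigl(1-\tfrac{1}{2p^{\pm}}-\tfrac{1}{2q^{\pm}}\bigr)$, i.e.\ that the extreme exponents satisfy the exact classical scaling, and then applies the constant-exponent Hardy--Littlewood--Sobolev inequality of Lieb--Loss to each of the two resulting terms. Your identification of the sequence hypothesis as the step securing the scaling compatibility is precisely its role in the paper's argument.
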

\begin{proof}
From \eqref{q1}, first we note that 
\begin{align*}
 \mu^+=\sup_{\R}\mu(x,y)&=2N\left(1-\frac{1}{2p(x,y)}-\frac{1}{2q(x,y)}\right)\nonumber\\
 &\leq 2N\left(1-\frac{1}{2p^+}-\frac{1}{2q^+}\right).\nonumber
\end{align*}
Also as 
\begin{align}\DD\lim_{n\rightarrow\infty}\mu(x_n,y_n)&=2N\DD\lim_{n\rightarrow\infty}
\left(1-\frac{1}{2p(x_n,y_n)}-\frac{1}{2q(x_n,y_n)}\right)\nonumber\\
&=2N
\left(1-\frac{1}{2\DD\lim_{n\rightarrow\infty}p(x_n,y_n)}-\frac{1}{2\DD\lim_{n\rightarrow\infty}q(x_n,y_n)}\right)\nonumber\\
&=2N\left(1-\frac{1}{2p^+}-\frac{1}{2q^+}\right),\nonumber
\end{align}
we get that $$\mu^+=2N\left(1-\frac{1}{2p^+}-\frac{1}{2q^+}\right).$$ Similarly we have
$$\mu^-=2N\left(1-\frac{1}{2p^-}-\frac{1}{2q^-}\right).$$ Now as
{\small \begin{align}
 \left|\int_{\RR^N}\int_{\RR^N}\frac{hg}{|x-y|^{\mu(x,y)}}dxdy\right|
 &\leq \int_{\RR^N}\int_{\RR^N}\frac{|hg|}{|x-y|^{\mu(x,y)}}dxdy\nonumber\\
 &\leq \int_{\RR^N}\int_{\RR^N}\frac{|hg|}{|x-y|^{\mu^+}}dxdy+
  \int_{\RR^N}\int_{\RR^N}\frac{|hg|}{|x-y|^{\mu^-}}dxdy\nonumber\\
  &\leq c_1\left(\|h\|_{L^{p^+}(\RR^N)}\|g\|_{L^{q^+}(\RR^N)}+\|h\|_{L^{p^-}(\RR^N)}\|g\|_{L^{q^-}(\RR^N)}\right),\nonumber
\end{align}}
where the last inequalities follows from the Hardy-Littlewood-Sobolev type of inequality for constant exponent case in \cite{Lieb2}. 
\end{proof}
{\bf Proof of Theorem \ref{HLS1}:}\\
Using the Sobolev embedding theorem( Theorem \ref{Sobolev for unbounded}), it is easy to check that 
$|u|^{r(\cdot)}\in L^{q^+}(\RR^N)\cap L^{q^-}(\RR^N)$. Now the proof follows by taking $h(x)=g(x)=|u|^{r(x)}$ and $q(x,y)=p(x,y)$
in the Theorem \ref{HLS1}.\hfill$\square$\\

Next we use the variational method to prove the
existence of solution of \eqref{mainprob}.
We consider the associated energy functional $J:X_0\rightarrow\mathbb R$, given as
{\small \begin{align*}
	J(u)&=\int_{\RR^N \times \RR^N}\frac{|u(x)-u(y)|^{p(x,y)}}{p(x,y)| x-y|^{N+s(x,y)p(x,y)}}dxdy
	-\frac{1}{2}\int_{\Om}\int_{\Om} \frac{F(x,u(x))F(y,u(y)) }{|x-y|^{\mu(x,y)}}dxdy.
\end{align*} }
Note that  as in  section \ref{sec 3} in \cite{Alves5}, Theorem \ref{HLS1} guarantees that $J$ is well-defined and $C^1$ on $X_0$,
with the derivative $J':X_0\ra X_0^*$, given as
\begin{align}\label{4.0}
		\big\langle J'(u),w\big\rangle&= \int_{\RR^N\times \RR^N}\frac{|
			u(x)-u(y)|^{p(x,y)-2}(u(x)-u(y))(w(x)-w(y))}{|
			x-y|^{N+s(x,y)p(x,y)}}dxdy\nonumber\\
		&- \int_{\Om}\int_{\Om} \frac{F(x,u(x))f(y,u(y)) w(y)}{|x-y|^{\mu(x,y)}}dxdy,
\end{align}
where $u,w\in X_0$. Here $\langle\cdot,\cdot\rangle$ denotes the duality pairing between $X_0$ and its dual $X_0^*.$
Thus by the standard critical point theory, the weak solutions of \eqref{mainprob1} are characterized by the 
critical points of $J$.
Also $J$ admits the mountain-pass geometry. Precisely, we have the following lemma.
\begin{lemma}\label{lem 5.1}Let the assumptions in Theorem \ref{existence} hold. Then
	\item[$(i)$] there exists  $\delta>0$, 
	such that $J(u)\geq \zeta>0$ for all $u\in X_0$ with $\| u \|_{X_0}=\delta.$
	\item [$(ii)$] there exists $ \phi\in X_{0}$ with $\| \phi \|_{X_0}>\delta$ such that $J(\phi)<0$.
\end{lemma}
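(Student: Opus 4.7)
For part $(i)$, the plan is to combine the modular-versus-norm estimates of Lemma \ref{lem 3.1}, the Hardy--Littlewood--Sobolev inequality of Theorem \ref{HLS1}, the compact embedding of Theorem \ref{prp 3.3}, and the growth hypothesis $(F1)$. Assuming $\|u\|_{X_0}\leq 1$, Lemma \ref{lem 3.1}$(iii)$ gives the lower bound
\[
\int_{\RR^N\times\RR^N}\frac{|u(x)-u(y)|^{p(x,y)}}{p(x,y)|x-y|^{N+s(x,y)p(x,y)}}\,dxdy\;\geq\;\frac{1}{p^+}\|u\|_{X_0}^{p^+}
\]
on the quadratic part. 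Using $(F1)$ in the form $|F(x,t)|\leq \frac{M}{r^-}|t|^{r(x)}$ and then Theorem \ref{HLS1}, the Choquard term is bounded above by $C\bigl(\||u|^{r(\cdot)}\|_{L^{q^+}(\RR^N)}^2+\||u|^{r(\cdot)}\|_{L^{q^-}(\RR^N)}^2\bigr)$. Because $\||u|^{r(\cdot)}\|_{L^{q^\pm}}^{q^\pm}=\int_\Om|u|^{r(x)q^\pm}\,dx$ is the modular of $u$ in $L^{r(\cdot)q^\pm}(\Om)$ and $r\in\mathcal M$ guarantees $1<r(x)q^\pm<p_s^*(x)$, Theorem \ref{prp 3.3} delivers $X_0\hookrightarrow L^{r(\cdot)q^\pm}(\Om)$; the small-norm branch of the modular--norm inequality then produces
\[
\int_\Om\int_\Om\frac{F(x,u)F(y,u)}{|x-y|^{\mu(x,y)}}\,dxdy\;\leq\;C_0\|u\|_{X_0}^{2r^-}.
\]
Since $(F1)$ forces $2r^->p^+$, we obtain $J(u)\geq \|u\|_{X_0}^{p^+}\bigl(\frac{1}{p^+}-C_0\|u\|_{X_0}^{2r^--p^+}\bigr)$, and choosing $\delta>0$ small enough yields the desired $\zeta>0$.

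For part $(ii)$, I would fix a nonnegative $u_0\in X_0\setminus\{0\}$ with $u_0\geq 1$ on some ball $B\subset\Om$, for instance a compactly supported smooth bump. Integrating the Ambrosetti--Rabinowitz condition $(F2)$ along positive rays yields $F(x,\tau)\geq F(x,t_0)(\tau/t_0)^{\Theta/2}$ for $\tau\geq t_0>0$, and $(F2)$ together with continuity of $F$ on compact sets provides a uniform $c_0>0$ with $F(x,tu_0(x))\geq c_0 t^{\Theta/2}$ on $B$ for $t$ large. This gives
\[
\frac{1}{2}\int_\Om\int_\Om\frac{F(x,tu_0)F(y,tu_0)}{|x-y|^{\mu(x,y)}}\,dxdy\;\geq\; C_1 t^\Theta,\qquad C_1:=\frac{c_0^2}{2}\int_B\int_B\frac{dxdy}{|x-y|^{\mu(x,y)}}>0,
\]
while for $t\geq 1$ the leading term of $J(tu_0)$ is controlled by $\frac{t^{p^+}}{p^-}\rho_{X_0}(u_0)$ (using $t^{p(x,y)}\leq t^{p^+}$). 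Hence $J(tu_0)\leq C_2 t^{p^+}-C_1 t^\Theta\to -\infty$ as $t\to\infty$ since $\Theta>p^+$. Taking $t$ large enough, $\phi:=tu_0$ satisfies $J(\phi)<0$; the condition $\|\phi\|_{X_0}>\delta$ follows from $\rho_{X_0}(tu_0)\geq t^{p^-}\rho_{X_0}(u_0)\to\infty$ combined with Lemma \ref{lem 3.1}$(ii)$.

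The main technical difficulty I anticipate is the careful bookkeeping among the three pairs of exponents $(p^-,p^+)$, $(q^-,q^+)$ and $(r^-,r^+)$ together with the small-norm/large-norm dichotomies of Lemma \ref{lem 3.1} and its $L^{r(\cdot)q^\pm}(\Om)$ analogue; in particular, one must verify that $r\in\mathcal{M}$ places $r(\cdot)q^\pm$ in the admissible Sobolev range for Theorem \ref{prp 3.3}, and that $(F2)$ unpacks to a pointwise lower bound on $F$ that is uniform over the chosen ball $B$. Once these points are settled, both parts of the lemma reduce to routine coercivity and anti-coercivity computations.
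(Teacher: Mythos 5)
Your proposal is correct and follows essentially the same route as the paper: part $(i)$ combines $(F1)$, the Hardy--Littlewood--Sobolev-type inequality of Theorem \ref{HLS1}, the embedding of Theorem \ref{prp 3.3} and the modular--norm estimates of Lemma \ref{lem 3.1} to get $J(u)\geq \frac{1}{p^+}\|u\|_{X_0}^{p^+}-C\|u\|_{X_0}^{2r^-}$ with $r^->p^+$, and part $(ii)$ uses the Ambrosetti--Rabinowitz condition $(F2)$ to obtain $F(x,t)\gtrsim |t|^{\Theta/2}$ and conclude $J(t\xi)\lesssim t^{p^+}-ct^{\Theta}\to-\infty$. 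The only (harmless) deviations are that you derive the superlinear lower bound on $F$ by integrating $(F2)$ directly and localize it on a ball where the test function is bounded below, whereas the paper cites Lemma 4 of \cite{SV2} and applies the bound with a fixed $\xi\in C_0^\infty(\Om)$, $\xi>0$.
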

\begin{proof} 
	$(i).$  First using $(F1)$ and Lemma \ref{lem 3.1} and Theorem \ref{prp 3.3}, we note that for $u\in X_0$, $F(x,u(x))\in L^{q^+}(\Om)\cap L^{q^-}(\Om)$.
	Indeed from $(F1)$ we have $F(x,0)=0$ and thus
	{\small \begin{align}\label{sec4eq1}
	 \|F(\cdot,u(\cdot))\|_{L^{q^+}(\RR^N)}=\|F(\cdot,u(\cdot))\|_{L^{q^+}(\Om)}
	 &\leq c_2\left(\int_{\Om}|u(x)|^{r(x)q^+}dx\right)^{1/q^+}\nonumber\\
 	 &\leq c_2\max\left(\|u\|_{L^{r(x)q^+}(\RR^N)}^{r^+},\|u\|_{L^{r(x)q^+}(\RR^N)}^{r^-}\right)\nonumber\\
	 &\leq c_3\max\left(\|u\|_{X_0}^{r^+},\|u\|_{X_0}^{r^-}\right),
	\end{align}}
where the constant $c_2,~ c_3>0$ are independent of $u.$
Similarly for $u\in X_0$ we can check that $F(x,u(x))\in  L^{q^-}(\Om)$. Hence from Theorem \ref{HLS1} and \eqref{sec4eq1} we infer that
{\small \begin{align}
 \label{sec4eq2}
 \left|\int_{\RR^N}\int_{\RR^N}\frac{F(x,u(x))F(y,u(y))}{|x-y|^{\mu(x,y)}}dxdy\right|&
 \leq C\left(\|F(\cdot,u(\cdot))\|^2_{L^{q^+}(\RR^N)}+\|F(\cdot,u(\cdot))\|^2_{L^{q^-}(\RR^N)}\right)\nonumber\\
	 &\leq c_4\left\{\max\{\|u\|_{X_0}^{2r^+},\|u\|_{X_0}^{2r^-}\}\right\}
\end{align}}
for some constants $C,c_4>0$ independent of $u.$
Using Lemma \ref{lem 3.1} and \eqref{sec4eq2}, for $\|u\|_{X_0}<1$,  we have
	\begin{align}
			J(u)&=\int_{\RR^N \times \RR^N}\frac{| u(x)-u(y)|^{p(x,y)}}{p(x,y)| x-y|^{N+s(x,y)p(x,y)}}dxdy\nonumber\\
			&~~~~~~~~-\frac{1}{2}\int_{\Om}\int_{\Om} \frac{F(x,u(x))F(y,u(y)) }{|x-y|^{\mu(x,y)}}dxdy\nonumber\\
			&\geq \frac{1}{p^+} \| u \|_{X_0}^{p^+}-
			c_5\left(\max\{\|u\|_{X_0}^{2r^+},\|u\|_{X_0}^{2r^-}\}\right)\nonumber\\
			&\geq \frac{1}{p^+} \| u \|_{X_0}^{p^+}-
			c_5\|u\|_{X_0}^{2r^-},\nonumber
	\end{align}
	where $c_5>0$ is independent of $u\in X_0$.
	Now noting that $r^->p^+$, we can choose $\delta>0$ sufficiently small such that $J(u)\geq\zeta>0$ for all $u\in X_0$ with $\| u \|_{X_0}=\delta.$\\
	$(ii).\hspace{2mm}$ Recalling Lemma 4 in \cite{SV2} and using $(F2)$, it follows that there  exist two constants $l_1, l_2>0$ such that
	{\begin{align}\label{0.0}
			F(x,t)\geq l_1 |t|^{\Theta/2}
	\end{align}}
	\noi for all $x\in\Om$ and $|t|\geq l_2.$	
	Now, for $\xi\in C^\infty_0(\Om)$ with $\xi>0$ and $t>0$ sufficiently large,
	using Lemma \ref{lem 3.1} and  \eqref{0.0}, we deduce that
	{\small \begin{align*} 
			J(t\xi)
			&=\int_{\RR^N \times \RR^N}\frac{| t\xi(x)-t\xi(y)|^{p(x,y)}}{p(x,y)|x-y|^{N+s(x,y)p(x,y)}}dxdy
			-\frac{1}{2}\int_{\Om}\int_{\Om}\frac{F(x,t\xi(x))F(y,t\xi(y))}{|x-y|^{\mu(x,y)}}dxdy\\
			&\leq \frac{t^{p^{+}}}{p^-}\| \xi \|_{X_0}^{p{^+}}
			-\frac{l_1^2{t^{\Theta}}}{2}\int_{\Om}\int_{\Om}\frac{|\xi(x)|^ {\Theta/2}|\xi(y)|^ {\Theta/2}}{|x-y|^{\mu(x,y)}}dxdy.
	\end{align*}}
	 Since $p^+<\Theta$ in $(F2)$, it follows that $J(t\xi)\ra-\infty$ as $t\ra+\infty.$
	 This guarantees the existence of $ \phi\in X_{0}$ such that $J(\phi)<0$.
\end{proof}
Next we recall Lemma A.1 in \cite{JSG} for variable exponent Lebesgue spaces which is used to prove that $J$ satisfies Palais-Smale condition.
\begin{lemma}\label{lemA1}
	Let $\nu_1(x)\in L^\infty(\Om)$ such that $\nu_1\geq0,\; \nu_1\not\equiv 0.$ Let $\nu_2:\Om\ra\RR$ be a measurable function 
	such that $\nu_1(x)\nu_2(x)\geq 1$ a.e. in $\Om.$ Then for every $u\in L^{\nu_1(x)\nu_2(x)}(\Om),$ 
	$$\parallel |u|^{\nu_1(\cdot)}\parallel_{L^{\nu_2(x)}(\Om)}\leq 
	\parallel u\parallel_{L^{\nu_1(x)\nu_2(x)}(\Om)}^{\nu_1^-}+\parallel u\parallel_{L^{\nu_1(x)\nu_2(x)}(\Om)}^{\nu_1^+}.$$
\end{lemma}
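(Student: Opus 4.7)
The plan is to work directly from the Luxemburg characterization of the variable exponent Lebesgue norm. Set $\tau := \|u\|_{L^{\nu_1(x)\nu_2(x)}(\Om)}$ and $\rho := \tau^{\nu_1^-} + \tau^{\nu_1^+}$; by the very definition of the Luxemburg norm, in order to conclude $\||u|^{\nu_1(\cdot)}\|_{L^{\nu_2(x)}(\Om)} \leq \rho$, which is precisely the desired inequality, it suffices to establish the modular bound
$$\int_\Om \left(\frac{|u(x)|^{\nu_1(x)}}{\rho}\right)^{\nu_2(x)} dx \leq 1.$$
The degenerate case $\tau=0$ forces $u=0$ a.e., for which the claim is trivial, so I may assume $\tau>0$.

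To verify the modular bound, I would rewrite the integrand by absorbing $\rho$ into the base through the exponent $\nu_1(x)$, namely
$$\frac{|u(x)|^{\nu_1(x)\nu_2(x)}}{\rho^{\nu_2(x)}} = \left(\frac{|u(x)|}{\rho^{1/\nu_1(x)}}\right)^{\nu_1(x)\nu_2(x)}.$$
Since $\nu_1(x)\nu_2(x)\geq 1$ a.e., the right-hand side is a legitimate integrand for the $L^{\nu_1\nu_2}$-modular, and by the definition of $\tau$ we already know
$$\int_\Om \left(\frac{|u(x)|}{\tau}\right)^{\nu_1(x)\nu_2(x)} dx \leq 1.$$
Thus the problem reduces to the pointwise comparison $\rho^{1/\nu_1(x)} \geq \tau$, or equivalently $\rho \geq \tau^{\nu_1(x)}$ for a.e.\ $x \in \Om$; once this is in hand the two displayed integrals can be compared pointwise to finish.

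The pointwise inequality is the combinatorial heart of the proof and follows from a one-variable fact: for any $t \geq 0$ and any exponent $a \in [\nu_1^-, \nu_1^+]$, monotonicity of $a \mapsto t^a$ gives $t^a \leq t^{\nu_1^+}$ when $t \geq 1$ and $t^a \leq t^{\nu_1^-}$ when $0 \leq t \leq 1$, so in either case $t^a \leq t^{\nu_1^-} + t^{\nu_1^+}$. Applying this with $t=\tau$ and $a=\nu_1(x)$ yields $\tau^{\nu_1(x)} \leq \rho$ almost everywhere, and substituting back into the modular integral completes the argument. I do not anticipate any serious obstacle beyond careful bookkeeping of scalings; the hypothesis $\nu_1 \in L^\infty$ enters only to guarantee that $\nu_1^+$ is finite and hence that $\rho$ is well-defined, while $\nu_1 \geq 0$ with $\nu_1 \not\equiv 0$ together with $\nu_1\nu_2 \geq 1$ ensures that the exponent $\nu_1(x)\nu_2(x)$ defines a bona fide variable exponent space.
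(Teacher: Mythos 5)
Your argument is correct. Note that the paper does not prove this lemma at all; it simply recalls it as Lemma A.1 of \cite{JSG}, and your modular argument is essentially the standard proof of that result: the usual version splits into the cases $\tau\le 1$ and $\tau\ge 1$ to get the bound $\max\{\tau^{\nu_1^-},\tau^{\nu_1^+}\}$, which you absorb in one stroke via the sum $\rho=\tau^{\nu_1^-}+\tau^{\nu_1^+}$ and the elementary inequality $\tau^{a}\le \tau^{\nu_1^-}+\tau^{\nu_1^+}$ for $a\in[\nu_1^-,\nu_1^+]$. Two small points worth making explicit: the hypothesis $\nu_1\nu_2\ge 1$ a.e.\ forces $\nu_1>0$ a.e., which is what legitimizes writing $\rho^{1/\nu_1(x)}$; and the inequality $\int_\Om\big(|u|/\tau\big)^{\nu_1\nu_2}dx\le 1$ at the Luxemburg norm itself is not literally the definition of the infimum but follows by monotone convergence (or the standard norm--modular relation for exponents $\ge 1$), which deserves a one-line justification.
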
	
\begin{lemma}
\label{lem 5.2}
Let the assumptions in Theorem \ref{existence} hold.
Then for any $c\in\RR,$ the functional $J$ satisfies the Palais-Smale $($ in short $(PS)_c)$ condition. 
\end{lemma}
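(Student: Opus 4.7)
The plan is to show that any Palais--Smale sequence $\{u_m\}\subset X_0$ at level $c$ is relatively compact in $X_0$ by the classical three--step strategy: (i) boundedness of $\{u_m\}$ via the Ambrosetti--Rabinowitz condition $(F2)$, (ii) extraction of a weakly convergent subsequence together with strong convergence of the Choquard term along that subsequence, and (iii) upgrade from weak to strong convergence in $X_0$ using an $(S_+)$--type monotonicity argument for the variable--order fractional $p(\cdot)$--Laplacian.

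For boundedness, I would exploit the inequality
\begin{align*}
J(u_m)-\frac{1}{\Theta}\langle J'(u_m),u_m\rangle
&\geq\Big(\frac{1}{p^+}-\frac{1}{\Theta}\Big)\rho_{X_0}(u_m)\\
&\quad+\frac{1}{\Theta}\int_{\Om}\!\!\int_{\Om}\frac{F(x,u_m)\big[2f(y,u_m)u_m(y)-\Theta F(y,u_m)\big]}{|x-y|^{\mu(x,y)}}dxdy,
\end{align*}
where the last double integral is nonnegative by $(F2)$ (note $F\geq 0$ by $(F2)$). Since $J(u_m)\to c$ and $\|J'(u_m)\|_{X_0^*}\to 0$, the left side is bounded above by $c+1+\e_m\|u_m\|_{X_0}$ for some $\e_m\ra 0$; combining with $\Theta>p^+$ and Lemma \ref{lem 3.1} ($\rho_{X_0}(u_m)\geq \|u_m\|_{X_0}^{p^-}$ once $\|u_m\|_{X_0}>1$) and using $p^->1$ yields boundedness of $\{u_m\}$ in $X_0$.

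By reflexivity (Proposition \ref{reflx}), I would pass to a subsequence with $u_m\rightharpoonup u$ in $X_0$, and by the compact embedding of Theorem \ref{prp 3.3} obtain $u_m\ra u$ strongly in $L^{r(x)q^+}(\Om)\cap L^{r(x)q^-}(\Om)$ and a.e. in $\Om$. Using $(F1)$, a variant of the computation in \eqref{sec4eq1} together with Lemma \ref{lemA1} shows that $F(\cdot,u_m)\ra F(\cdot,u)$ in $L^{q^+}(\Om)\cap L^{q^-}(\Om)$ and $f(\cdot,u_m)u_m(\cdot)\ra f(\cdot,u)u(\cdot)$ in the same spaces; then the variable--exponent HLS inequality (Theorem \ref{HLS1}) lets me conclude
\[
\int_{\Om}\int_{\Om}\frac{F(x,u_m(x))f(y,u_m(y))(u_m-u)(y)}{|x-y|^{\mu(x,y)}}dxdy\ra 0.
\]
Combined with $\langle J'(u_m),u_m-u\rangle\ra 0$ (which follows because $\{u_m-u\}$ is bounded in $X_0$), this gives
\[
\mathcal{L}_m:=\int_{\R}\frac{|u_m(x)-u_m(y)|^{p(x,y)-2}(u_m(x)-u_m(y))\big[(u_m-u)(x)-(u_m-u)(y)\big]}{|x-y|^{N+s(x,y)p(x,y)}}\,dxdy\ra 0.
\]

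The main obstacle, and the last step, is converting $\mathcal{L}_m\ra 0$ into $\|u_m-u\|_{X_0}\ra 0$. The plan is to invoke an $(S_+)$--type property for the variable--order $p(\cdot)$--Laplacian: writing $\mathcal{L}_m=\langle A(u_m)-A(u),u_m-u\rangle+\langle A(u),u_m-u\rangle$, where $A$ denotes the principal operator, the second term vanishes by weak convergence and the first is controlled from below using the standard Simon--type pointwise inequalities
$\bigl\langle |a|^{p-2}a-|b|^{p-2}b,\,a-b\bigr\rangle\geq c_p|a-b|^p$ (when $p\geq 2$) and $\geq c_p|a-b|^2(|a|+|b|)^{p-2}$ (when $1<p<2$), applied pointwise with $p=p(x,y)$, together with a H\"older argument handling the $p<2$ region via the $L^{p(x,y)}$ bound on the difference quotient $U_m(x,y)=(u_m(x)-u_m(y))/|x-y|^{s(x,y)}$. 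This forces $\rho_{X_0}(u_m-u)\ra 0$, hence $\|u_m-u\|_{X_0}\ra 0$ by Lemma \ref{lem 3.2}. The variable nature of both $s$ and $p$ in the kernel is what makes this final passage delicate: the usual constants depend on the exponent, so a partition of $\R$ according to $\{p(x,y)\geq 2\}$ and its complement, combined with uniform continuity of $s$ and $p$, will be needed to make the $(S_+)$ argument rigorous.
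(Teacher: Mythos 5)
Your proposal is correct and follows essentially the same route as the paper: boundedness from the Ambrosetti--Rabinowitz structure of $(F2)$, then weak convergence plus the compact embedding (Theorem \ref{prp 3.3}) and the variable-exponent Hardy--Littlewood--Sobolev inequality with $(F1)$, H\"older and Lemma \ref{lemA1} to show the Choquard term tested against $u_m-u$ vanishes, and finally Simon-type inequalities with the splitting $\{p(x,y)\geq 2\}$ versus $\{1<p(x,y)<2\}$ to pass from $\langle I(u_m)-I(u),u_m-u\rangle\to 0$ to $\rho_{X_0}(u_m-u)\to 0$ and conclude via Lemma \ref{lem 3.2}. The only cosmetic deviations are a harmless constant (the nonnegative remainder carries $\tfrac{1}{2\Theta}$ rather than $\tfrac{1}{\Theta}$) and that the paper gets by on the $p<2$ region with the uniform constants $\tfrac{1}{p^{-}-1}$ and $2^{p^{+}}$, so no appeal to uniform continuity of $s$ and $p$ is actually needed there.
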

\begin{proof}  
	Let $ \{u_m\} \subset X_0 $ be a $(PS)_c$ sequence of the functional $J$, that is,
	$J(u_m)\ra c$ and $\| J'(u_m)\|_{X_0^*}\ra 0$ as $m\ra \infty$.		 		
	Note that, $\{u_m\}$ is bounded in $X_0$.
	Indeed, if  $\{u_m\}$ is unbounded in $X_0,$ 
for $m$ large enough, using Lemma \ref{lem 3.1} and Theorem \ref{prp 3.3} together with $(F2)$, we have
	{\small\begin{align*}
			&o_m(1)+ c+\|u_m\|_{X_0}\\
			&\geq J(u_m) -\frac{1}{\Theta} \langle J'(u_m),u_m\rangle\\
			&=\int_{\RR^N \times \RR^N}\frac{| u_m(x)-u_m(y)|^{p(x,y)}}{p(x,y)| x-y|^{N+s(x,y)p(x,y)}}dxdy
			-\frac{1}{2}\int_{\Om}\int_{\Om}\frac{F(x,u_m(x))F(y,u_m(y))}{|x-y|^{\mu(x,y)}}dxdy\\
			&-\frac{1}{\Theta}\int_{\RR^N \times \RR^N}\frac{| u_m(x)-u_m(y)|^{p(x,y)}}{| x-y|^{N+s(x,y)p(x,y)}}dxdy
			+\frac{1}{\Theta}{\int_\Om}\int_{\Om}\frac{F(x,u_m(x))f(y,u_m(y))u_m(y)}{|x-y|^{\mu(x,y)}}dxdy\\
			&>\Big(\frac{1}{p^+}- \frac{1}{\Theta}\Big)\int_{\RR^N \times \RR^N}\frac{| u_m(x)-u_m(y)|^{p(x,y)}}{| x-y|^{N+s(x,y)p(x,y)}}dxdy\\
			&+\int_{\Om}\int_{\Om}\frac{F(x,u_m(x))\bigg[\DD\frac{1}{\Theta} f(y,u_m(y))u_m(y)-\frac{1}{2}F(y,u_m(y))\bigg] }{|x-y|^{\mu(x,y)}}dxdy\\
			&> \Big(\frac{1}{p^+}- \frac{1}{\Theta}\Big) \| u_m
			\|_{X_0}^{p^-}.
	\end{align*}} 

	Since $1<p^-\leq p^+<\Theta$  in $(F2)$, from the above expression, we get a contradiction and hence the sequence $\{u_m\} $ is bounded in $X_0.$
	Since $X_0$ is a reflexive Banach space ( Proposition \ref {reflx}), it follows that there exists $u_1\in X_0$ such that up to a subsequence,  
	$u_m \rightharpoonup u_1 $ weakly in $X_0$ and
	$u_m(x)\ra u_1(x)$ point-wise a.e. $x \in \RR^N$ as $m\ra\infty.$ 
We claim that 
	$u_m \rightarrow u_1 $ strongly in $X_0$ as $m\ra\infty.$ We define $I:X_0\ra X_0^*$, as
	\begin{align}\label{5.2} 
		\big\langle I(u), w\big\rangle:=\int_{\RR^N\times \RR^N}\frac{|
			u(x)-u(y)|^{p(x,y)-2}(u(x)-u(y))(w(x)-w(y))}{|
			x-y|^{N+s(x,y)p(x,y)}}dxdy,\nonumber\\
	\end{align}
	where $u, w\in  X_0$.
	Since $\{u_m\}$ is bounded in $X_0$ and  $\| J'(u_m)\|_{X_0^*}\ra 0$  as $ m\ra \infty $, for $w=u_m-u_1$, 
	taking into account \eqref{4.0} and \eqref{5.2}, we deduce  
	{\begin{align}\label{5.3}
			o_m(1)&=\big\langle J'(u_m),(u_m-u_1)\big\rangle \nonumber\\
			&=\langle I(u_m),(u_m-u_1)\rangle -
			\int_\Om\int_{\Om}\frac{F(x,u_m(x) f(y,u_m(y))(u_m-u_1)(y)}{|x-y|^{\mu(x,y)}}dxdy.
	\end{align}}
	Now  using Theorem \ref{HLS1} and Theorem \ref{prp 3.3} we can estimate the second term in 
	the right hand side of \eqref{5.3} as follows.
	First we note that	
	\begin{align}\label{5.4}
 & \left|\int_{\Om}\int_{\Om}\frac{F(x,u_m(x))f(y,u_m(y))(u_m(\cdot)-u_1(\cdot))}{|x-y|^{\mu(x,y)}}dxdy\right|\nonumber\\
&\leq C\|F(\cdot,u_m(\cdot))\|_{L^{q^+}(\Om)}\|f(\cdot,u_m(\cdot))(u_m(\cdot)-u_1(\cdot))\|_{L^{q^+}(\Om)}\nonumber\\
&~~+C\|F(\cdot,u_m(\cdot))\|_{L^{q^-}(\Om)}\|f(\cdot,u_m(\cdot))(u_m(\cdot)-u_1(\cdot))\|_{L^{q^-}(\Om)}\nonumber\\
  &\leq c_6\max\left\{\|u_m\|_{L^{r(x)q^+}(\Om)}^{r^+},\|u_m\|_{L^{r(x)q^+}(\Om)}^{r^-}\right\}\|f(\cdot,u_m(\cdot))(u_m(\cdot)-u_1(\cdot))\|_{L^{q^+}(\Om)}\nonumber\\
  &~~+ c_6\max\left\{\|u\|_{L^{r(x)q^-}(\Om)}^{r^+},\|u\|_{L^{r(x)q^-}(\Om)}^{r^-}\right\}\|f(\cdot,u_m(\cdot))(u_m(\cdot)-u_1(\cdot))\|_{L^{q^-}(\Om)}\nonumber\\
   &\leq c_7\max\left\{\|u_m\|_{X_0}^{r^+},\|u_m\|_{X_0}^{r^-}\right\}\|f(\cdot,u_m(\cdot))(u_m(\cdot)-u_1(\cdot))\|_{L^{q^+}(\Om)}\nonumber\\
  &~~+ c_7\max\left\{\|u_m\|_{X_0}^{r^+},\|u_m\|_{X_0}^{r^-}\right\}\|f(\cdot,u_m(\cdot))(u_m(\cdot)-u_1(\cdot))\|_{L^{q^-}(\Om)},\nonumber\\
 \end{align}
 where the constants $C,c_6,c_7>0$ are independent of $u_m.$
 Now  using $(F1)$, together with H\"{o}lder's inequality and Lemma \ref{lemA1} and the fact that $u_m\ra u_1$ strongly in 
	$L^{q^-r(x)}(\Om)$ as $m\ra\infty$,    we have
 \begin{align}\label{sec4eq3}
 &\|f(\cdot,u_m(\cdot))(u_m(\cdot)-u_1(\cdot))\|^{q^+}_{L^{q^+}(\Om)}\nonumber\\
 &=\int_\Om |f(y,u_m(y))(u_m(y)-u_1(y))|^{q^+}dy\nonumber\\
 &\leq M^{q^+} \int_\Om |u_m(y)|^{(r(y)-1)q^+}|~|(u_m(y)-u_1(y))|^{q^+}dy\nonumber\\
 &\leq c_8\|u_m^{(r(\cdot)-1)q^+}\|_{L^{\frac{r(x)}{r(x)-1}}(\Om)}\|(u_m-u_1)^{q^+}\|_{L^{r(x)}(\Om)}\nonumber\\
 &\leq c_9\left(\|u_m\|^{(r^+-1)q^+}_{L^{r(x)q^+}(\Om)}+\|u_m\|^{(r^--1)q^+}_{L^{r(x)q^+}(\Om)}\right) 
 \|(u_m-u_1)\|_{L^{q^+r(x)}(\Om)}^{q^+}\nonumber\\
 &\leq c_{10}\left(\|u_m\|^{(r^+-1)q^+}_{X_0}+\|u_m\|^{(r^--1)q^+}_{X_0}\right) 
  \|(u_m-u_1)\|_{L^{q^+r(x)}(\Om)}^{q^+}\nonumber\\
 &\leq c_{11}\|(u_m-u_1)\|_{L^{q^+r(x)}(\Om)}^{q^+}=o_m(1).
 \end{align}Here $c_8,c_9,c_{10}$ and $c_{11}$ are non-negative constants independent of $u_m,u_1.$ 
 		Again  arguing similarly as above, we obtain	
	\begin{align}\label{5.4.1}
		\|f(\cdot,u_m(\cdot))(u_m(\cdot)-u_1(\cdot))\|_{L^{q^-}(\Om)}=o_m(1).
	\end{align}	  
	\noi Thus from \eqref{5.3}-\eqref{5.4.1}, we deduce that 
	$$\langle I(u_m),(u_m-u_1)\rangle  \ra 0
	\text{ and }\langle I(u_1),(u_m-u_1)\rangle  \ra 0\text{ as }m\ra\infty,$$
	which imply
	\begin{equation}\label{5.5}
	\big\langle (I(u_m)-I(u_1)),( u_m-u_1)\big\rangle=o_m(1).
	\end{equation}
	Next we recall the following inequalities due to Simon \cite{Simon}: For all $x,y\in \RR^N$, we have
	{\small \begin{equation}\label{5.0}
		\left\{ \begin{array}{rl}
		|x-y|^p &\leq \frac{1}{p-1}\Big[ \left(|x|^{p-2}x-|y|^{p-2}y\right).\left(x-y\right)\Big]^{\frac{p}{2}}
		\left(|x|^p+|y|^p\right)^{\frac{2-p}{2}} ,1<p<2,\\
		|x-y|^p	&\leq {2^p}\Big(|x|^{p-2}x-|y|^{p-2}y\Big).\left(x-y\right)~,~~~~~~~~ p\geq 2,
		\end{array}
		\right.
	\end{equation}}
	
	\noi We define $ \Om_1:=\{(x,y)\in \RR^N\times \RR^N: 1<p(x,y)<2\}$, $\Om_2:=\{(x,y)\in \RR^N\times \RR^N: 
	p(x,y)\geq 2\}$ and denote $v_m=u_m-u_1$.
	Then we have the following estimate.	
	\begin{align}\label{111}	
\ro_{X_0}(v_m)&=\int_{\RR^N\times \RR^N}\frac{| v_m(x)-v_m(y)|^{p(x,y)}}{|
x-y|^{N+s(x,y)p(x,y)}}dxdy \nonumber\\&=\int_{\Om_1}\frac{|
v_m(x)-v_m(y)|^{p(x,y)}}{| x-y|^{N+s(x,y)p(x,y)}}dxdy	 +\int_{\Om_2}\frac{|
				v_m(x)-v_m(y)|^{p(x,y)}}{|
				x-y|^{N+s(x,y)p(x,y)}}dxdy.
				\end{align}
	Now for $(x,y)\in \Om_1,$ taking into account
	Lemma \ref{lem 3.1}, H\"{o}lder's inequality, Lemma \ref{lemA1} and \eqref {5.0}, we deduce
	{\small\begin{align}\label{5.6}
			&\int_{\Om_1}\frac{|
				v_m(x)-v_m(y)|^{p(x,y)}}{|
				x-y|^{N+s(x,y)p(x,y)}}dxdy\nonumber\\
					&\leq \frac{1}{(p^{-}-1)}
							\int_{\Om_1} \bigg[\bigg\{\frac{|
								u_m(x)-u_m(y)|^{p(x,y)-2}(u_m(x)-u_m(y))(v_m(x)-v_m(y))}{|
								x-y|^{N+s(x,y)p(x,y)}}\nonumber\\
							&-\frac{|
								u_1(x)-u_1(y)|^{p(x,y)-2}(u_1(x)-u_1(y))(v_m(x)-v_m(y))}{|
								x-y|^{N+s(x,y)p(x,y)}}\bigg\}^{p(x,y)/2}\nonumber\\
							&\times \bigg\{\frac{|
								u_m(x)-u_m(y)|^{p(x,y)}+|
								u_1(x)-u_1(y)|^{p(x,y)}}{|
								x-y|^{N+s(x,y)p(x,y)}}\bigg\}^{\frac{2-p(x,y)}{2}}\bigg] dxdy\nonumber\\
			&\leq \frac{1}{(p^{-}-1)}
			\int_{\RR^N\times \RR^N} \bigg[\bigg\{\frac{|
				u_m(x)-u_m(y)|^{p(x,y)-2}(u_m(x)-u_m(y))(v_m(x)-v_m(y))}{|
				x-y|^{N+s(x,y)p(x,y)}}\nonumber\\
			&-\frac{|
				u_1(x)-u_1(y)|^{p(x,y)-2}(u_1(x)-u_1(y))(v_m(x)-v_m(y))}{|
				x-y|^{N+s(x,y)p(x,y)}}\bigg\}^{p(x,y)/2}\nonumber\\
			&\times \bigg\{\frac{|
				u_m(x)-u_m(y)|^{p(x,y)}+|
				u_1(x)-u_1(y)|^{p(x,y)}}{|
				x-y|^{N+s(x,y)p(x,y)}}\bigg\}^{\frac{2-p(x,y)}{2}}\bigg] dxdy\nonumber\\
			&\leq \frac{1}{(p^{-}-1)}
			\int_{\RR^N\times \RR^N}\bigg[\bigg\{\frac{|
				u_m(x)-u_m(y)|^{p(x,y)-2}(u_m(x)-u_m(y))(v_m(x)-v_m(y))}{|
				x-y|^{N+s(x,y)p(x,y)}}\nonumber\\		
			&-\frac{|
				u_1(x)-u_1(y)|^{p(x,y)-2}(u_1(x)-u_1(y))(v_m(x)-v_m(y))}{|
				x-y|^{N+s(x,y)p(x,y)}}
			\bigg\}^{p(x,y)/2}\nonumber\\		
			&\times\bigg \{\bigg(\frac{|
				u_m(x)-u_m(y)|^{p(x,y)}} {|
				x-y|^{N+s(x,y)p(x,y)}}\bigg)^{\frac{2-p(x,y)}{2}}
			+\bigg(\frac{|
				u_1(x)-u_1(y)|^{p(x,y)}} {|
				x-y|^{N+s(x,y)p(x,y)}}\bigg)\bigg\}^{\frac{2-p(x,y)}{2}}  \bigg] dxdy\nonumber\\
			&= c_{12} \int_{\RR^N\times \RR^N}\bigg[\bigg\{ g_1(x,y)^{\frac{p(x,y)}{2}}\cdot g_2(x,y)^{\frac{2-p(x,y)}{2}}\bigg\}\nonumber\\
			&+\bigg\{ g_1(x,y)^{\frac{p(x,y)}{2}}\cdot g_3(x,y)^{\frac{2-p(x,y)}{2}}\bigg\}\bigg] dxdy\nonumber\\
			&\leq  c_{12} \bigg[ \| g_1^{\frac{p(\cdot,\cdot)}{2}}\|_{L^{\frac{2}{p(x,y)}}(\RR^N\times\RR^N)}  
			\bigg\{\| g_2^{\frac{2-p(\cdot,\cdot)}{2}}\|_{L^{\frac{2}{2-p(x,y)}}(\RR^N\times\RR^N)}\nonumber\\
			&~~~~~~~~~~~~~+\|g_3^{\frac{2-p(\cdot,\cdot)}{2}}\|_{L^{\frac{2}{2-p(x,y)}}(\RR^N\times\RR^N)}\bigg\}\bigg]\nonumber \\
			&\leq c_{12}\bigg[\bigg\{ \| g_1\|_{L^1(\RR^N\times\RR^N)}^{\frac{p^+}{2}} + \| g_1\|_{L^1(\RR^N\times\RR^N)}^{\frac{p^-}{2}}\bigg \}\nonumber\\
			&\times \bigg\{ \| g_2\|_{L^1(\RR^N\times\RR^N)}^{\frac{2-p^+}{2}} + \| g_2\|_{L^1(\RR^N\times\RR^N)}^{\frac{2-p^-}{2}}
			+\|g_3\|_{L^1(\RR^N\times\RR^N)}^{\frac{2-p^+}{2}} + \| g_3\|_{L^1(\RR^N\times\RR^N)}^{\frac{2-p^-}{2}} \bigg\} 
			\bigg],\nonumber\\
	\end{align}}
	\noi where $c_{12}>0$ is some constant independent of $u_m,u_1$ and $g_{i},i=1,2,3$ are defined as follows.
	{\small\begin{align*}\DD g_1(x,y)&= \bigg[\frac{|
				u_m(x)-u_m(y)|^{p(x,y)-2}(u_m(x)-u_m(y))(v_m(x)-v_m(y))}{|
				x-y|^{N+s(x,y)p(x,y)}}\\
			&~~-\frac{|
				u_1(x)-u_1(y)|^{p(x,y)-2}(u_1(x)-u_1(y))(v_m(x)-v_m(y))}{|
				x-y|^{N+s(x,y)p(x,y)}}
			\bigg],\\
			\DD g_2(x,y)&= \bigg[\frac{|
		u_m(x)-u_m(y)|^{p(x,y)}} {|
 		x-y|^{N+s(x,y)p(x,y)}}\bigg],\\
		g_3(x,y)&= \bigg[ \frac{|
		u_1(x)-u_1(y)|^{p(x,y)}} {|
		x-y|^{N+s(x,y)p(x,y)}}\bigg].
	\end{align*}}
	Finally using Lemma \ref{lem 3.1} and \eqref{5.6}, it follows that
	{\small\begin{align}\label{5.7}
			&\int_{\Om_1}\frac{|
							v_m(x)-v_m(y)|^{p(x,y)}}{|
							x-y|^{N+s(x,y)p(x,y)}}dxdy\nonumber\\&\leq c_{12} \bigg[ \bigg\{\big\langle (I(u_m)-I(u_1)),(u_m-u_1)\big \rangle^{\frac {p^+}{2}} 
			+ \big\langle(I(u_m)-I(u_1)),(u_m-u_1)\big\rangle^{\frac {p^-}{2}}\bigg\}\nonumber\\
			&~~~~~~~~~~~~~~~\times \bigg\{ \rho_{X_0} (u_m)^{\frac{2-p^+}{2}} 
			+ \rho_{X_0}(u_m)^{\frac{2-p^-}{2}} +\rho_{X_0}(u_1)^{\frac{2-p^+}{2} }
			+ \rho_{X_0}(u_1)^{\frac{2-p^+}{2}}\bigg\}
			\bigg]. 
	\end{align}}
Combining \eqref {5.5} and \eqref{5.7} and using the fact that $\{u_m\}$ is bounded in $X_0,$ we estimate 
{\small \begin{align} \label{333}
\int_{\Om_1}\frac{|
				v_m(x)-v_m(y)|^{p(x,y)}}{|
				x-y|^{N+s(x,y)p(x,y)}}dxdy\ra 0  ~~as~~ m\ra\infty.
				\end{align}}
Again for $(x,y)\in \Om_2$, taking into account  Lemma \ref{lem 3.1}, H\"{o}lder's inequality, Lemma \ref{lemA1}, \eqref {5.5}
	and Simon's inequality \eqref{5.0}, we deduce
{\small \begin{align}\label{444}
\int_{\Om_2}\frac{|v_m(x)-v_m(y)|^{p(x,y)}}{|x-y|^{N+s(x,y)p(x,y)}}dxdy&\leq 2^{p^+}\big\langle(I(u_m)-I(u_1)), (u_m-u_1)\big\rangle
		=o_m(1).
	\end{align}}
	Thus from \eqref{111},  \eqref{333} and \eqref{444}, we get
	$$ \ro_{X_0}(v_m)\ra 0 ~~as~~ m\ra 0.$$
	Therefore Lemma \ref{lem 3.2} gives us $\DD\lim_{m\ra\infty}\| u_m-u_1\|_{X_0}\ra 0$.
	This completes the lemma.
\end{proof}

\noi Now, we give the proof of Theorem \ref{existence}.

\noi\textbf{{ Proof of Theorem \ref{existence}:}}\\
From Lemma \ref{lem 5.1} and Lemma \ref{lem 5.2}, it follows that  $J$
satisfies the Mountain pass geometry and Palais-Smale condition. Therefore by using Mountain pass theorem, we infer that
there exists $u_1\in X_0$,  a critical point of 
$J$, with
\begin{align} \label{123}
J(u_1)=\bar c>0.
\end{align}
Also as $J(0)=0$, thanks to $(F1)$,
 we get that $u_1$ is a non-trivial weak solution of the problem $(\ref{mainprob1})$.

\noi{\bf Proof of Theorem \ref{multiplicity}:}\\
Note that the weak solutions for the problem \eqref{mainprob} are characterized by the critical points of the following 
$C^1$-functional associated with \eqref{mainprob}.
\begin{align*}
	J_\la(u)&=\int_{\RR^N \times \RR^N}\frac{| u(x)-u(y)|^{p(x,y)}}{p(x,y)| x-y|^{N+s(x,y)p(x,y)}}dxdy-\la \int_{\Om}\frac{| u |^ {\al(x)}}{\al(x)}dx\nonumber\\
	&~~~~~~-\frac{1}{2}\int_{\Om}\int_{\Om} \frac{F(x,u(x))F(y,u(y)) }{|x-y|^{\mu(x,y)}}dxdy.
\end{align*}
First we claim that
there exists $\Lambda>0$ such that for every $\la\in(0,\Lambda)$ we can find $\zeta_\la>0$ and $0<\delta_\la<<1$
such that $J_\la(u)\geq \zeta_\la>0$ for all $u\in X_0$ with $\| u \|_{X_0}=\delta_\la$.
Indeed, using Lemma \ref{lem 3.1}, Theorem \ref{prp 3.3}, Theorem \ref{hls1} together with $(F1)$ and $(F2)$, for $\|u\|_{X_0}<1$,  we have
	{\begin{align}\label{NEW2}
			J_\la(u)&=\int_{\RR^N \times \RR^N}\frac{| u(x)-u(y)|^{p(x,y)}}{p(x,y)| x-y|^{N+s(x,y)p(x,y)}}dxdy
			-\la \int_{\Om}\frac{| u |^ {\al(x)}}{\al(x)}dx\nonumber\\
			&~~~~~-\frac{1}{2}\int_{\Om}\int_{\Om} \frac{F(x,u(x))F(y,u(y)) }{|x-y|^{\mu(x,y)}}dxdy\nonumber\\
			&\geq \frac{1}{p^+} \| u \|_{X_0}^{p^+}-\frac{\la}{\al^- } \max\left\{\| u \|_{L^{\al(x)}(\Om)}^{\al^- },
			\| u \|_{L^{\al(x)}(\Om)}^{\al^+ }\right\}\nonumber\\
			&~~~~-c_{13}\max\left\{\| u \|_{X_0}^{2r^- },\| u \|_{X_0}^{2r^+ }\right\}\nonumber\\
			&\geq\frac{1}{p^+} \| u \|_{X_0}^{p^+}-\frac{\la c_{14}}{\al^- } \| u \|_{X_0}^{\al^-  }-c_{13}\| u \|_{X_0}^{2r^- }\nonumber\\
			&\geq\bigg\{\frac{1}{p^+} - \frac{\la c_{14}}{\al^- }\| u \|_{X_0}^{\al^- -p^+ }-{c_{13}}\| u \|_{X_0}^{2r^- -p^+  }
			\bigg\}\| u \|_{X_0}^{p^+},
	\end{align}}
where the constants  $c_{13}, c_{14}>0$ are independent of $u$.	
	Now for each $\la>0$ we define the function,
	$T_{\la}:(0,+\infty)\ra\RR$ as
	{\begin{align*}T_{\la}(t)=c_{14}\frac{\la}{\al^- } t^{\al^- -p^+ }+c_{13}t^{2r^- -p^+  }.\end{align*}}
	 		Since   we have  $1<\al^-<p^+<r^-$, it follows that
	 		$\DD \lim _{t\ra0} T_{\la}(t)=\DD\lim _{t\ra \infty} T_{\la}(t)=+\infty.$ Thus we can find infimum of $ T_\la $.	 		
	 		Note that equating
	 		$$T'_{\la}(t)= \frac{\al^- -p^+ }{\al^- }\la c_{14} +c_{13} (2r^--p^+) t^{2{r}^--\al^-}=0,$$
	 		we get $t_0=t=\bigg( \la \frac{p^+-\al^- }{(2r^--p^+)\al^- }\cdot\frac{c_{14}}{c_{13}}\bigg)^{1/(2r^--\al^-)}.$
	 		Clearly $t_0>0$. Also it can be checked that $T''_{\la}(t_0)>0$ and hence infimum of $T_{\la}(t)$ is achieved at $t_0$.
	 		Now observing that
	 		\begin{align}\label{New2} 
	 		T_{\la}(t_0)&=\la \frac{c_{14}}{\al^-}
	 		\bigg( \la \frac{p^+-\al^-}{(2r^--p^+)\al^-}\cdot\frac{c_{14}}{c_{13}}\bigg)^{\DD\frac{\al^--p^+}{2r^--\al^-}}\nonumber\\
	 		&~~~~~+{c_{13}}
	 		\bigg( \la \frac{p^+-\al^-}{(2r^--p^+)\al^-}\cdot\frac{c_{14}}{c_{13}}\bigg)^{\DD\frac{2r^--p^+}{r^--\al^-}}\nonumber\\
	 		&=\la^{\DD{\frac{2r^--p^+}{r^--\al^-}}}\cdot c_{15}\ra0\text{ as\hspace{2mm}} \la\ra0^+,
	 		\end{align}	
	for some constant $c_{15}>0$ independent of $u.$ 	
	Therefore we infer from \eqref{NEW2} that there exists $\Lambda>0 $ such that for any $\la\in(0,\Lambda),$
	we can choose $\zeta_\la>0$ and $0<\delta_\la<<1$ such that
	\begin{align}\label{New3}
\text{	
 $J_\la(u)\geq \zeta_\la>0 $ for all $ u\in X_0$ with $\| u \|_{X_0}=\delta_\la$.}
	\end{align}
On the other hand, for $\xi\in C_0^\infty(\Om), \xi>0$ we have
	{\begin{align*} 
			J_\la(t\xi)
			&=\int_{\RR^N \times \RR^N}\frac{| t\xi(x)-t\xi(y)|^{p(x,y)}}{p(x,y)| x-y|^{N+s(x,y)p(x,y)}}dxdy
			-\la \int_{\Om}\frac{| t\xi |^ {\al(x)}}{\al(x)}dx\\
			&-\frac{1}{2}\int_{\Om}\int_{\Om}\frac{F(x,t\xi(x))F(y,t\xi(y))}{|x-y|^{\mu(x,y)}}dxdy\\
			&\leq \frac{t^{p^{+}}}{p^-}\| \xi \|_{X_0}^{p{^+}}
			-\frac{l_1^2{t^{\Theta}}}{2}\int_{\Om}\int_{\Om}\frac{|\xi(x)|^ {\Theta/2}|\xi(y)|^ {\Theta/2}}{|x-y|^{\mu(x,y)}}dxdy\\
			&\ra - \infty\text{ as }t\ra+\infty.
	\end{align*}}
	Thus there exists $\Phi\in X_0$ such that
	\begin{equation}\label{New4}
\|\Phi\|_{X_0}>\delta_{\la}\text{ and }J_{\la}(\Phi)<0.
	\end{equation}
	Using the fact that $J_{\la}(0)=0$ (thanks to $(F1)$) and from \eqref{New3}-\eqref{New4}, it follows that
	$J_\la$ admits a mountain-pass geometry.	
	Also using the similar arguments as in the proof of Lemma  \ref{lem 5.2} and the compactness of the embedding $X_0$ into $ L^{\al(x)}(\Om)$,
	one can check that the functional $J_\la$ satisfies the Palais-Smale condition
	$(PS)_c$ for any $c\in\RR$. Hence from Mountain pass theorem, we infer that there exists
a non-trivial weak solution, $u_{1\la}$ (say) of the problem $(\ref{mainprob})$ with $J_\la(u_{1\la})>0$.\\
Now we prove the existence of the second weak solution of \eqref{mainprob}. 
Under the assumptions in Theorem \ref{multiplicity},
there exists $ \varphi \in X_{0},\; \varphi >0$ such that $J_\la(t \varphi) <0$  for all $ t\ra0^+$. Indeed 
for $\varphi\in C^\infty_0(\Om)$ such that $\varphi>0$ and for sufficiently small $0<t$, $\| t\varphi \|_{X_0}<1$.
	Then from $(F2)$ and Lemma \ref{lem 3.1}, we obtain
	\begin{align*} 
		J_\la(t\varphi)&=\int_{\RR^N \times \RR^N}\frac{| t\varphi(x)-t\varphi(y)|^{p(x,y)}}{p(x,y)| x-y|^{N+s(x,y)p(x,y)}}dxdy
		-\la \int_{\Om}\frac{| t\varphi |^ {\al(x)}}{\al(x)}dx\\
		&~~~-\frac{1}{2}\int_{\Om}\int_\Om\frac{F(x,t\varphi) F(y,t\varphi)}{|x-y|^{\mu(x,y)}}dxdy\\
		&\leq\frac{t^{p^{-}}}{p^-}\| \varphi \|_{X_0}^{p{^-}}
		-\frac{\la t^{\al^+}}{\al^+}\int_{\Om}| \varphi(x) |^ {\al(x)}dx.
	\end{align*}
	As  $\al^+<p^-$, it follows that $J(t\varphi)<0$ as $t\ra0^+.$ Hence we infer that
\begin{align}\label{6.0}
 \DD\inf _{u\in \overline {B}_{\delta_\la}(0)} J_\la(u)=\underline{c}<0, 
\end{align} 
where $\overline{B}_{\delta_\la}(0)=\{u\in X_0 : \|u\|_{X_0}\leq\delta_\la\}.$
  Now by applying Ekeland variational principle, for
given any $\epsilon>0$ there exists $w_\epsilon\in\overline {B}_{\delta_\la} (0)$ such that 
\begin{align}\label{6.1}
J_\la(w_\epsilon)&<\DD\inf _{u\in \overline {B}_{\delta_\la}(0)} J_\la(u)+\epsilon
\end{align}
           and
\begin{align}\label{6.2}
J_\la(w_\epsilon)&<J_\la(u)+\epsilon\|u-w_\epsilon\|_{X_0},\;\text{for all}\; u\in B_{\delta_\la}(0),\; u\not=w_\epsilon.
\end{align}
We choose $\varrho>0$ such that 
\begin{align}\label{6.3}
0<\varrho<\DD\inf _{u\in \partial {B}_{\delta_\la}(0)} J_\la(u)-\DD\inf _{u\in \overline {B}_{\delta_\la}(0)} J_\la(u).
\end{align}
Putting together \eqref{6.1} and \eqref{6.3}, we obtain $J_\la(w_\epsilon)< \DD\inf _{u\in \partial {B}_{\delta_\la}(0)} J_\la(u)$, 
which implies $w_\epsilon \in B_\rho(0).$ By  taking $u=w_\epsilon+tv $ in \eqref{6.2} with $t>0$ and $v\in  {B}_{\delta_\la}(0) \setminus\{0\}$, we deduce
\begin{align*}
J_\la(w_\epsilon)-J_\la(w_\epsilon+tv)\leq \delta_\la t\|v\|_{X_0}.
\end{align*}
Thus
\begin{align*}
\DD\lim_{t\ra 0}\frac{J_\la(w_\epsilon)-J_\la(w_\epsilon+tv)}{t}\leq \delta_\la\|v\|_{X_0},
\end{align*}
that is, for all $v\in B_{\delta_\la}(0),$ we have
\begin{align}\label{6.4}
\langle-J'_\la(w_\epsilon),v\rangle\leq \delta_\la\|v\|_{X_0}.
\end{align}
Replacing $v$ by $-v$ in (\ref{6.4}), we get 
\begin{align}\label{6.5}
(J'_\la(w_\epsilon),v)\leq \delta_\la\|v\|_{X_0}.
\end{align}
Taking into account \eqref{6.4} and \eqref{6.5}, we obtain
\begin{align}\label{6.6}
\|J'_\la(w_\epsilon)\|_{X_0^*}\leq \delta_\la.
\end{align}
From \eqref{6.6}, it follows that there exists a sequence $\{w_m\}\subset B_{\delta_\la}(0)$ such that 
$$J_\la(w_m)\ra\underline{c}\;\text{ and }
\; J'_\la(w_m)\ra0 \;\text{ in }\; X_0^*\;\text{ when }\; m\ra\infty.$$
Therefore from Lemma \ref{lem 5.2} and \eqref{6.0}, we can conclude that there exists $u_{2\la}\in \overline{B}_{\delta_\la}(0)\subset X_0$ 
such that $w_m\ra u_{2\la}$ strongly in $X_0$-norm as $m\ra\infty$
with
\begin{align}\label{6.7}
J_\la(u_{2\la})=\underline{c}<0 .
\end{align}
Thus we get that
$u_{2\la} $ is a nontrivial weak solution of (\ref{mainprob}).
Now from \eqref{123} and \eqref{6.7}, 
we have $J_\la(u_{1\la})>0>J_\la(u_{2\la})$, and hence $ u_{1\la} \not= u_{2\la}.$
\hfill$\square$

\end{document}